\newenvironment{packedenum}{
 \begin{enumerate}
 \setlength{\itemsep}{1pt} \setlength{\parskip}{0pt}
 \setlength{\parsep}{0pt} }{\end{enumerate}}
\newenvironment{packedenuma}{
 \begin{enumerate}[a.]
 \setlength{\itemsep}{1pt} \setlength{\parskip}{0pt}
 \setlength{\parsep}{0pt} }{\end{enumerate}}
\theoremstyle{plain}
\newcounter{thm}[section]
\newtheorem{theorem}[thm]{Theorem}
\newtheorem{corollary}[thm]{Corollary}
\newtheorem{lemma}[thm]{Lemma}
\newcounter{ass}[section]
\newtheorem{assumption}[ass]{Assumption}
\newcounter{defi}[section]
\newtheorem{definition}[defi]{Definition}
\newcounter{rem}[section]
\newtheorem{remark}[rem]{Remark}
\def\myproof[#1]{\par{\it Proof. #1} \ignorespaces}
\def\R{\mathbb R}
\newcommand{\fibcot}{({\mathbb R}^n)^* \times {\mathbb R}^n}
\newcommand{\cotsp}{\left( \R^n \right)^*}
\newcommand{\spa}{\R^n}
\newcommand{\cotbun}{T^*\R^n}
\newcommand{\tansp}{\R^n}
\newcommand{\e}{\varepsilon}
\newcommand\raggio{\rho}
\newcommand{\qo}{\text{ a.e.~}}
\newcommand\CS{V}
\newcommand\argmin{\operatorname{argmin}}
\newcommand\spd[2]{\dfrac{\partial#1}{\partial#2}}
\newcommand{\Span}{\operatorname{span}} \newcommand{\N}{\mathbb N}
\renewcommand{\a}{\alpha}
\newcommand{\se} {^{\prime \prime}}
\newcommand{\cinf}{C^\infty}
\newcommand{\ext}{\textrm{ext}}
\newcommand{\sgn}[1]{{\rm sgn}\left( #1 \right)}
\newcommand{\abs}[1]{\left\vert#1\right\vert}
\newcommand{\norm}[1]{\Vert#1\Vert}
\newcommand{\direz}[1]{\dfrac{\partial}{\partial {#1}}}
\newcommand{\dsum}{\displaystyle\sum}
\newcommand{\ud}{\operatorname{d}\!}
\newcommand{\uD}{\operatorname{D}\!}
\newcommand{\xJ}{{\^ x_1}}
\newcommand{\lf}{\widehat\ell_f}
\newcommand{\dl}{{\delta\ell}}
\newcommand{\dtau}{{\delta\tau}}
\newcommand{\dT}{{\delta T}}
\newcommand{\dx}{{\delta x}}
\newcommand{\scal}[2]{\langle {#1} \, , \, {#2} \rangle}
\newcommand{\dueforma}[2]{{\boldsymbol\sigma}\left({#1},{#2}\right)}
\newcommand{\bsi}{{\boldsymbol\sigma} }
\newcommand{\unoforma}{{\boldsymbol s}}
\newcommand{\gi}[2]{g_{{#1}, \,{#2}}}
\newcommand{\dgi}[2]{{\dot g}_{{#1}, \,{#2}}}
\newcommand{\gir}[2]{g^{r}_{{#1}, \,{#2}}}
\newcommand{\dgir}[2]{{\dot g^{r}}_{{#1}, \,{#2}}}
\newcommand{\dgin}[2]{{\dot g^{r_n}}_{{#1}, \,{#2}}}
\newcommand{\Sigmar}{\Sigma^r}
\newcommand{\cA}{{\mathcal A}}
\newcommand{\cF}{{\mathcal F}}
\newcommand{\cG}{{\mathcal G}}
\newcommand{\cH}{{\mathcal H}}
\newcommand{\cFS}{{\mathcal F}^{S}}
\newcommand{\cO}{{\mathcal O}}
\newcommand{\cS}{{\mathcal S}}
\newcommand{\cU}{{\mathcal U}}
\newcommand{\cV}{{\mathcal V}}
\newcommand{\cW}{{\mathcal W}}
\newcommand{\vchi}{\overrightarrow{\chi}}
\newcommand{\vFref}[1]{\overrightarrow{\^ F_{#1}}}
\newcommand{\vF}[1]{\overrightarrow{F_{#1}}}
\newcommand{\Fr}[1]{{F^r_{#1}}}
\newcommand{\vFr}[1]{\overrightarrow{F^r_{#1}}}
\newcommand{\vFS}{\overrightarrow{F^S}}
\newcommand{\vFSr}{\overrightarrow{F^{S, r}}}
\newcommand{\vG}{\overrightarrow{G}}
\newcommand{\vH}[1]{\overrightarrow{H_{#1}}}
\newcommand{\vHr}[1]{\overrightarrow{H^r_{#1}}}
\newcommand{\Maxdd}{M_1}
\newcommand{\mindd}{m_1}
\newcommand{\minbang}{\alpha_1}
\newcommand{\Maxddse}{M_2}
\newcommand{\minddse}{m_2}
\newcommand{\minbangse}{\alpha_2}
\newcommand\inv{^{-1}}
\renewcommand\*{\circ}
\newcommand\ol{\overline}
\newcommand{\Pbr}{\mathbf{(P_r)}}
\newcommand{\Pbz}{\mathbf{(P_0)}}
\renewcommand\^{\widehat }
\renewcommand\~{\widetilde}
\newcommand{\lri}{\widehat\ell_0}
\newcommand{\mri}{\widehat\mu_0}
\newcommand{\lrp}{\widehat\ell_1}
\newcommand{\mrp}{\widehat\mu_1}
\newcommand{\lrs}{\widehat\ell_2}
\newcommand{\mrs}{\widehat\mu_2}
\title{Structural stability for bang--singular--bang extremals in the
 minimum time problem \thanks{This work was partially supported by
 PRIN 200894484E\_002, Controllo Nonlineare: metodi geometrici e applicazioni} }
\author{Laura Poggiolini and Gianna Stefani\thanks{Dipartimento di
 Sistemi e Informatica -- Universit\`a degli Studi di Firenze,
 Italy ({\tt laura.poggiolini@unifi.it}, {\tt
 gianna.stefani@unifi.it}).} }
\date{}
\begin{document}

\maketitle
\begin{abstract}
  In this paper we study the structural stability of a
  bang-singular-bang extremal in the minimum time problem between
  fixed points.  The dynamics is single-input and control-affine.

  On the nominal problem ($r = 0$), we assume the coercivity of a
  suitable second variation along the singular arc and regularity both
  of the bang arcs and of the junction points, thus obtaining the
  strict strong local optimality for the given bang-singular-bang
  extremal trajectory. Moreover, as in the classically studied regular
  cases, we assume a suitable controllability property, which grants
  the uniqueness of the adjoint covector.

  Under these assumptions we prove that, for any sufficiently small
  $r$, there is a bang-singular-bang extremal trajectory which is a
  strict strong local optimiser for the $r$-problem. A uniqueness result
  in a neighbourhood of the graph of the nominal extremal pair is also
  obtained.

  The results are proven via the Hamiltonian approach to optimal
  control and by taking advantage of the implicit function theorem, so
  that a sensitivity analysis could also be carried out.
\end{abstract}
{\bfseries Keywords}: 
  Hamiltonian methods, second variation, structural stability.

\section{Introduction}
\label{sec:intro}
Since in practical optimisation problems the values of the data
usually are not known exactly and/or are subject to disturbances,
stability and sensitivity analysis constitute a crucial element of the
so-called post-optimisation analysis, which helps to evaluate the
practical usefulness of the obtained results.
 
Here we study the structural stability of a bang--singular--bang
extremal in the minimum time problem where the dynamics is
single-input and control-affine.  The paper is based on the
Hamiltonian approach which is used both in the optimality and in the
stability results.

We point out that, as in the classically studied regular cases (see
\cite{Mln93, Mln94, Mln01}), the assumptions on the nominal problem
are the ones which give optimality, see \cite{PS08, PS11}, together
with a controllability assumption which grants the uniqueness of the
adjoint covector.

The parameter-dependent minimum time problem $\Pbr$ we study is given
by
\begin{align}
  &\dot \xi^r(t) = f_0^r(\xi(t)) + u(t) f_1^r(\xi(t))&\label{eq:2r}\\
  & u(t) \in [-1, 1] \label{eq:4r}
\end{align}
and is constrained to
\begin{equation}\label{eq:3r}
  \xi^r(0) = a^r \, , \quad \xi^r(T) = b^r\,,
\end{equation}
where $a^r$ and $b^r$ are two given points. The parameter $r$ is in
$\R^m$, the state space is $\R^n$ (but the result can be easily
generalised to the case when the state space is a smooth finite
dimensional manifold) and all the data are assumed to be smooth, say
$\cinf$.

We study two different kinds of strong local optimality of a triplet
$(T^r, \xi^r, u^r)$ which is admissible for $\Pbr$ according to the
following definitions
\begin{definition}
  The trajectory $\xi^r \colon [0, T^r] \to \spa$ is a {\em (time,
    state)--local minimiser} of $\Pbr$ if there is a neighbourhood
  $\~\cU$ of its graph in $\R \times \spa$ and $\e >0$ such that
  $\xi^r$ is a minimiser among the admissible trajectories whose
  graphs are in $\~\cU$ and whose final time is greater than $T^r -
  \e$, independently of the values of the associated controls.
\end{definition}

We point out that this kind of optimality is local both with respect
to time and space. A {\em stronger version} of strong local optimality
is the so--called {\em state--local optimality} which is defined as
follows:
\begin{definition}
  The trajectory $\xi^r$ is a {\em state--local minimiser} of $\Pbr$
  if there is a neighbourhood $\cU$ of its range in $\spa$ such that
  $\xi^r$ is a minimiser among the admissible trajectories whose range
  is in $\cU$, independently of the values of the associated controls.
\end{definition}

For the nominal problem ($r=0$), we assume the coercivity of a
suitable second variation along the singular arc and regularity both
of the bang arcs and of the junction points, thus obtaining the strict
state local optimality for the given bang-singular-bang extremal, and
a suitable controllability assumption along the singular arc only, see
Section \ref{sec:assnominal}.

Under these assumptions we shall prove that, for any sufficiently
small $r \in \R^m$, there is a bang-singular-bang extremal trajectory
$\xi^r$ which is a strict strong local optimiser for problem $\Pbr$.
Moreover, if $\mu^r$ is the costate associated to $\xi^r$, then there
exists a neighbourhood $\cV$ of the graph of the nominal pair
$\^\lambda = \left( \^\mu, \^\xi \right)$ such that $\left( \mu^r ,
  \xi^r \right)$ is the only extremal pair of $\Pbr$ whose graph is in
$\cV$.

The results are proven via the Hamiltonian approach to optimal control
and by taking advantage of the implicit function theorem. Thus the
trajectory $\xi^r$ and its switching times depend smoothly on the
parameter $r$, so that a sensitivity analysis could also be carried
out.

For the regular cases we refer to \cite{Mln93, Mln94, Mln01} and the
references therein.  For control affine dynamics we mention
\cite{Fel04, FPS09, PSp08, PSp11} where bang-bang extremals for the
nominal problem are considered.  Bang-singular-bang extremals for the
Mayer problem are studied also in \cite{Fel12, Fel11} where the
author, under suitable assumptions, shows that if the perturbed
problem has an extremal which is some sense {\em near} the reference
one, then this extremal has the same bang-singular-bang structure.

We assume we are given a reference triplet $(\^T, \^\xi, \^u)$ which
is a normal bang--singular--bang Pontryagin extremal for the nominal
problem $\Pbz$ that is $\^u$ has the following structure
\begin{equation}
  \label{eq:refcontrol}
  \begin{alignedat}{2}
    &\^ u(t) \equiv u_1 \in \{ -1, 1 \} \qquad && \forall t \in [0,
    \^\tau_1) \, , \\
    & \^ u(t) \in(-1,1) \qquad && \forall t \in (\^\tau_1, \^\tau_2)
    \, , \\
    &\^ u(t) \equiv u_2 \in \{ -1, 1 \} \qquad && \forall t \in (
    \^\tau_2, \^T] \, .
  \end{alignedat}
\end{equation}
so that the {\em reference vector field} driving the nominal system is given by
\[
\^{f}_t =\begin{cases}
  h_1 := f_0 + u_1 f_1 &\mbox{if }\ t \in [0,\^\tau_1 )\\
  f_0 + \^ u(t) f_1 &\mbox{if }\ t \in (\^\tau_1 ,\^\tau_2) \\
  h_2 := f_0 + u_2 f_1 &\mbox{if }\ t \in (\^\tau_2, \^T ]
\end{cases}
\]
We shall refer to $\^\tau_1$, $\^\tau_2$ as to the {\em switching
  times} of the reference control $\^ u$.

The plan of the paper is as follows: we conclude this section by
giving the fundamental notation. In Section \ref{sec:assnominal} we
state the assumptions on the nominal problem; the regularity
assumptions are stated in Section \ref{ass:PMP} while the coercivity
and the controllability assumptions are stated in Sections
\ref{sec:ejse} and \ref{sec:consequences}. In Section \ref{sec:main}
we give the main results and an example. Finally in Section
\ref{sec:proof} we give all the proofs of the main results.
\subsection{Notation}\label{sub:notation}
In this paper we use some basic element of the theory of symplectic
manifolds for the cotangent bundle $\cotbun = \cotsp \times \spa$. For
a general introduction see \cite{Arn80}, for specific application to
Control Theory see e.g.~\cite{AS04}. Let us recall some basic facts
and let us introduce some specific notations.

We denote by $ \pi \colon \ell = (p, q) \in \cotbun \mapsto q \in
\spa$ the canonical projection.  If $V \subset \spa$ we denote as
$V^\perp \subset \cotsp$ its orthogonal space. The symbol $\unoforma$
denotes the canonical Liouville one--form on $\cotbun$: $\unoforma :=
\sum_{i=1}^n p^i \ud q_i$.  The associated canonical symplectic
two--form $\bsi = \ud\unoforma$ allows one to associate to any,
possibly time-dependent, smooth Hamiltonian $H_t \colon \cotbun
\rightarrow \R$, a Hamiltonian vector field $\vH{t}$, by
\[
\bsi(v,\vH{t}(\ell))=\scal{ \ud H_t(\ell)}{v} ,\quad \forall v\in
T_{\ell}\cotbun .
\]
In coordinates
\[
\vH{t}(\ell) \left( \left.  - \, \dfrac{\partial H_t}{\partial q}
  \right\vert_{\ell}, \left. \dfrac{ \partial H_t}{\partial p}
  \right\vert_{\ell} \right) ,\quad \forall \ell = ( p, q) \in \cotbun
.
\]
In this paper the switching time $\^\tau_1$ plays a special role,
hence we consider all the flows as starting at time $\^\tau_1$. We
denote the flow of $\vH{t}$ from time $\^\tau_1$ to time $t$ by
\[
\cH \colon (t,\ell) \mapsto \cH(t,\ell)=\cH_t(\ell) .
\]

We keep these notation throughout the paper, namely the overhead arrow
denotes the vector field associated to a Hamiltonian and the script
letter denotes its flow from time $\^\tau_1$, unless otherwise stated.

Finally recall that any vector field $f$ on $\spa$ defines, by lifting
to the cotangent bundle, a Hamiltonian
\begin{equation*}
  F \colon \ell =(p, q) \in \cotbun \mapsto \scal{p}{f(q)} \in \R.
\end{equation*}
We denote by $F_0^r$, $F_1^r$, $H_1^r$, $H_2^r$, the Hamiltonians
associated to $f_0^r$, $f_1^r$, $h_1^r$, $h_2^r$, respectively and by
\[
\Fr{i_1 i_2\dots
  i_k}:=\{\Fr{i_1},\{\dots\{\Fr{i_{k-1}},\Fr{i_k}\}\dots \},\quad
i_1,\dots , i_k\in\{0,\ 1\}
\]
the Hamiltonian associated to $f^r_{i_1 i_2\dots i_k}:=
[f^r_{i_1},[\dots[f^r_{i_{k-1}},f^r_{i_k}]\dots ],$ where
$\{\cdot,\cdot\}$ denotes the Poisson parentheses between Hamiltonians
and $[\cdot,\cdot]$ denotes the Lie brackets between vector fields.



The flow from time $\^\tau_1$ of the reference vector field $\^ f_t $
 is a map defined in a neighbourhood of
the point $ \xJ := \^\xi(\^\tau_1)$.  We denote it as $ \^ S_t \colon
\spa \to \spa, \quad t \in [0, \^ T] $ while
\[
\^{F}_t =\begin{cases}
  H_1 &\mbox{if }\ t \in [0,\^\tau_1 )\\
  F_0 + \^ u(t) F_1 &\mbox{if }\ t \in (\^\tau_1 ,\^\tau_2) \\
  H_2 &\mbox{if }\ t \in (\^\tau_2, \^T ]
\end{cases}
\]
denotes the time--dependent reference Hamiltonian obtained lifting $\^
f_t$.

Moreover we define $H^{\max, r}$ to be the continuous maximised
Hamiltonian associated to the control system
\eqref{eq:2r}--\eqref{eq:4r}, i.e.
\[H^{\max, r} \colon \ell \mapsto \max_{u\in[-1,1]}\{ F_0^r (\ell) + u
F_1^r(\ell) \}.
\]
To facilitate reading, when $r =0$ we omit the parameter, i.e.~we
write $f_0$ instead of $f_0^0$, $f_1$ instead of $f_1^0$, $H^{\max}$
instead of $H^{\max, 0}$ and so on.

Also we use the following notation from differential geometry: $f\cdot
\a$ is the Lie derivative of a function $\a$ with respect to the
vector field $f$. Moreover, if $G$ is a $C^1$ map from a manifold $X$
in a manifold $Y$, we denote its tangent map at a point $x \in X$ as
$G_* $, if the point $x$ is clear from the context.

\section{Assumptions on the nominal problem}
\label{sec:assnominal}
In this section we state the assumptions on the nominal
extremal. Besides Pontryagin Maximum Principle, we state the
assumptions which ensure strong local optimality of the reference
trajectory, see \cite{PS11}: regularity assumptions on the bang arcs
and on the junction points and a coercivity assumption of a suitable
second variation on the singular arc. We are also making one further
assumption, i.e.~controllability along the
singular arc or, equivalently the uniqueness of the adjoint covector.

\subsection{Pontryagin Maximum Principle and Regularity Assumptions}
\label{sub:PMP}
In this section we recall the first order optimality condition which
the reference triplet $(\^T, \^\xi, \^u)$ must satisfy.

We call {\em extremal pair} of $\Pbz$ any curve in the cotangent
bundle which satisfies PMP and {\em extremal trajectory} of $\Pbz$ its
projection on the state space.
Here we ask for the reference trajectory to be a {\em normal} extremal
trajectory, i.e.~we assume that the triplet $(\^T, \^\xi, \^u)$
satisfies the following
\begin{assumption}[Normal PMP]\label{ass:PMP}
  There exists a solution $\^ \lambda = \left( \^\mu, \^\xi \right)
  \colon [0, \^ T] \to \cotbun$ of the Hamiltonian system
  \begin{equation*}
    \dot \lambda (t) =\vFref{t}\* \lambda(t)
  \end{equation*}
  such that
  \begin{equation}\label{eq:max}
    \scal{\^\mu(t)}{\^ f_t \* \^\xi(t)} = \^ F_t \* \^\lambda(t) = H^{\max} \* \^\lambda(t) = 1
    \quad \qo t\in [0,\^ T].
  \end{equation}
  $\^\mu$ is called {\em nominal adjoint covector} and satisfies the
  adjoint equation
  \[
  \dot{\^\mu}(t) = - \, \dfrac{\partial \^ F_t }{\partial q}\left(
    \^\mu(t), \^\xi (t) \right).
  \]
\end{assumption}
We denote the initial point, the junction points between the bang and
the singular arcs and the final point of $\^\lambda$ as
\begin{alignat*}{2}
  & \lri = (\mri, \^x_0) :=\^\lambda(0), \quad &&
  \lrp = (\mrp, \^x_1) :=\^\lambda(\^\tau_1), \\
  & \lrs = (\mrs, \^x_2) :=\^\lambda(\^\tau_2), \quad && \lf =
  (\^\mu_f, \^x_f) := \^\lambda(\^T),
\end{alignat*}
respectively. Because of the structure of the reference control $\^
u$, as defined by equations \eqref{eq:refcontrol}, PMP implies
\begin{alignat}{2}
  & u_1 F_1\*\^ \lambda (t) \geq 0 \quad
  && t\in [0,\^\tau_1) , \label{eq:F1b1} \\
  & F_1\* \^\lambda (t) = 0 \quad
  && t \in [\^\tau_1, \^\tau_2], \label{eq:H1} \\
  & u_2 F_1\*\^ \lambda (t) \geq 0 \quad && t\in (\^\tau_2 ,\^T ] .
  \label{eq:F1b2}
\end{alignat}
As a consequence, see \cite{PS11}, one gets
\begin{alignat}{2}
  & F_{01}\*\^\lambda (t) \equiv 0 \quad t \in [\^\tau_1,
  \^\tau_2], \label{eq:H01}\qquad && \left(F_{001} + \^
    u(t)F_{101}\right)\*\^\lambda(t) = 0 \quad t\in (\^\tau_1,
  \^\tau_2 ), 
  \\
  & u_1 \left( F_{001} + u_1 F_{101} \right) (\^\ell_1) \geq
  0, \label{eq:BL} && u_2 \left( F_{001} + u_2 F_{101} \right)
  (\^\ell_2) \geq 0.
\end{alignat}
PMP yields the mild inequalities in \eqref{eq:F1b1}, \eqref{eq:F1b2}
and \eqref{eq:BL}. We assume the strict inequalities to hold, whenever
possible.
\begin{assumption}[Regularity along the bang arcs] \label{ass:regbang}
  \[
  u_1 F_1\*\^\lambda(t) > 0 \quad \forall t\in [0,\^\tau_1), \qquad
  u_2 F_1\*\^\lambda(t) > 0 \quad \forall t\in (\^\tau_2, \^T].
  \]
\end{assumption}
\begin{assumption}[Regularity at the junction
  points]\label{ass:regjun}
  \begin{equation*}
    \left( u_1 F_{001} + F_{101} \right)(\lrp) > 0 , \qquad
    \left( u_2 F_{001} + F_{101} \right)(\lrs) > 0 .
  \end{equation*}
\end{assumption}
Another well known necessary condition for the local optimality of a
Pontryagin extremal is the {\em generalised Legendre condition} (GLC)
along the singular arc:
\begin{equation*}
  F_{101} \* \^ \lambda (t) \geq 0 \quad t\in
  [\^\tau_1, \^\tau_2 ]\, , 
\end{equation*}
see for example \cite{AS04}, Corollary 20.18 page 318; for a classical
result see \cite{GK72}.
The coercivity assumption stated in the next section implies the {\em
  Strengthened generalised Legendre condition}
\begin{equation} \label{eq:SGLC} %
  \tag{SGLC} F_{101}\*\^\lambda (t) > 0 , \quad t \in [\^\tau_1,
  \^\tau_2].
\end{equation}
When \eqref{eq:SGLC} holds, a singular extremal is called {\em of the
  first kind}, see e.g.~\cite{ZB94}.
\begin{remark}\label{re:refcont}
  \eqref{eq:SGLC} implies that $\^ u\in C^\infty
  ((\^\tau_1,\^\tau_2))$ and that Assumption \ref{ass:regjun} is
  equivalent to the discontinuity of $\^ u$ at times $\^\tau_1$ and
  $\^\tau_2$, see \cite{PS11}.
\end{remark}
 \begin{assumption}[Uniqueness of the adjoint covector]\label{ass:uniq}
   $\left. \^\lambda\right\vert_{[\^\tau_1, \^\tau_2]}$ is the only
   adjoint covector associated to $\left. \^\xi\right\vert_{[\^\tau_1,
     \^\tau_2]}$ for the minimum time problem between
   $\^\xi(\^\tau_1)$ and $\^\xi(\^\tau_2)$.
 \end{assumption}

 \subsection{Coercivity and controllability
   assumptions}\label{sec:ejse}
 System \eqref{eq:2r} is affine with respect to the control, therefore
 the standard second variation is completely degenerate. In
 \cite{PS11} we transformed the given minimum time problem in a Mayer
 problem on a fixed time interval and -- via a coordinate-free version
 of Goh's transformation -- we obtained a suitable second order
 approximation on the singular arc, which we call {\em extended second
   variation}.

 Proceeding as in Lemma 1 of \cite{PS12} one can show that the largest
 sub--space where the extended second variation can be coercive is the
 one relative to the minimum time problem with fixed end points
 $\xi(\^\tau_1) = \xJ$, $\xi(\^\tau_2)=\^x_2 $.

 We point out that the same assumption, together with Assumptions
 \ref{ass:regbang}--\ref{ass:regjun} is sufficient for $\^\xi$ to be a
 minimum time trajectory between $\^x_0$ and $\^x_f$, see \cite{PS11}.

 For the sake of completeness we write here the above mentioned Mayer
 problem:
 \begin{equation*}
   \text{Minimise}\quad \xi^0(\^\tau_2)
 \end{equation*}
 subject to
 \begin{align*}
   \begin{split}
     & \dot{\xi}^0 (s) = u_0(s)  \\
     & \dot{\xi}(s) = u_0(s) \, f_0(\xi(s)) + u_0(s)\, u(s) \,
     f_1(\xi(s)) \\
     & (u_0(s), u(s)) \in (0, +\infty) \times (-1, 1)
   \end{split}
   \qquad s \in [\^\tau_1, \^\tau_2]     \\
   & \xi^0(\^\tau_1) = \^\tau_1, \quad \xi(\^\tau_1) = \xJ \, , \quad
   \xi^0(\^\tau_2) \in \R , \quad \xi(\^\tau_2) = \^ x_2 .
 \end{align*}
 Also, for the sake of future computations we introduce the dragged
 vector fields at time $\^\tau_1$, along the reference flow, by
 setting
 \begin{equation*}
   \gi{i}{t}( x) := \^{ S}_{t*}\inv f_i \* \^{ S}_t( x)\, , \ i=0, 1,
   \quad
   \quad 
   \^g_t := \^{ S}_{t*}\inv \^f_t \* \^{ S}_t( x) = 
   \gi{0}{t} + \^u(t)\gi{1}{t},
 \end{equation*}
 and we recall that
 \begin{equation*}
   \dgi{1}{t}(x) =\^{ S}_{t*}\inv f_{01} \* \^{
     S}_t( x) , \qquad \dgi{0}{t}(x) =
   - \^ u(t) \dgi{1}{t}(x). 
 \end{equation*}

 Since the extremal $\^\lambda$ is normal, $f_0 $ and $f_1 $ are
 linearly independent at $\xJ$, so that we may choose local
 coordinates around $\xJ$ which simplify computations. Namely, we
 choose coordinates $y =
 \begin{pmatrix}
   y_1, \ldots, y_n
 \end{pmatrix}
 $such that
 \begin{equation*}
   \begin{minipage}{0.85\textwidth}
     \begin{packedenuma}
     \item $f_1$ is constant: $f_1 \equiv \direz{y_1}$,
     \item $f_0 = \direz{y_2} - y_1 \left( f_{01}(\xJ) + O(y)
       \right)$.
     \end{packedenuma}
   \end{minipage}
 \end{equation*}
 In such coordinates choose $\beta$ as $
 \beta(y) := - \dsum_{i=2}^{n} \mu_i y_i, $ where $(0, \mu_2, \ldots,
 \mu_n)$ are the coordinates of $\mrp$.  We get $\mu_2 = 1$, $f_1
 \cdot \beta \equiv 0$, and $f_0 \cdot f_0 \cdot \beta(\xJ) = 0$. In
 these coordinates the extended second
 variation 
 is thus actually given by the quadratic form
 \begin{multline} \label{eq:intsolo} \qquad J\se_\ext (\e_0, \e_1 , w)
   = \dfrac{1}{2}\int_{\^\tau_1}^{\^\tau_2} \big( w^2(t)[\dgi{1}{t},
     \gi{1}{t}]\cdot \beta(\xJ) + \\ 
     + 2 w(t)\,
     \zeta(t) \cdot \dgi{1}{t} \cdot \beta(\xJ) \big) \ud t \qquad
 \end{multline}
 defined on the linear sub--space $\cW$ of $\R^2 \times L^2([\^\tau_1,
 \^\tau_2], \R)$ of the triplets $(\e_0, \e_1, w)$ such that the
 linear system
 \begin{equation}\label{eq:sistjsefinal}
   \dot\zeta(t) = w(t) \dgi{1}{t}(\xJ), \quad
   \zeta(\^\tau_1) =\e_0 f_0 (\xJ) + \e_1 \, f_1 (\xJ), \quad 
   \zeta(\^\tau_2) = 0
 \end{equation}
 admits a solution $\zeta$, see \cite{PS11}.
 \begin{assumption}[Coercivity] \label{ass:coerc} The extended second
   variation for the minimum time problem between fixed end points on
   the singular arc is coercive. Namely we require that the quadratic
   form \eqref{eq:intsolo} is coercive on the subspace $\cW$ of
   $\R^2\times L^2([\^\tau_1, \^\tau_2], \R)$ given by the variations
   $\delta e=(\e_0,\e_1,w) $ such that system \eqref{eq:sistjsefinal}
   admits a solution.
 \end{assumption}
 \begin{remark}\label{re:jsec}
   \begin{packedenum}
   \item $J\se_\ext$ is a quadratic form defined in the whole space
     $\R^2 \times L^2([\^\tau_1, \^\tau_2], \R)$, but only its
     restriction to $\cW$ is coordinate free.
   \item Notice that
     \[
     R(t) := [\dgi{1}{t}, \gi{1}{t}]\cdot \beta(\xJ) =
     F_{101}(\^\lambda(t)) > 0.
     \]
   \item Under \eqref{eq:SGLC} $J\se_\ext$ can be proven to be the
     standard second variation, along the extremal pair
     $\left. \^\lambda\right\vert_{ [\^\tau_1, \^\tau_2]} $ of a
     nonsingular Mayer problem, see \cite{PS08} and \cite{Ste07} for
     more details.
   \end{packedenum}
 \end{remark}

 We now exploit Assumption \ref{ass:uniq} in relation to the
 controllability space (see e.g.~\cite{Conti76}) of system
 \eqref{eq:sistjsefinal}:
 \begin{equation}
   \CS := \Span \left\{f_0(\xJ),\; f_1(\xJ),\; \dgi{1}{t}(\xJ), \;
     t\in [\^\tau_1,\^\tau_2]\right\}. \label{eq:moltuni2}
 \end{equation}
 \begin{lemma}\label{le:unico}
   Assumption \ref{ass:uniq} holds if and only if $\CS = \tansp$.
 \end{lemma}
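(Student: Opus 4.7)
The plan is to reduce Assumption \ref{ass:uniq} to a linear algebra statement by transporting every PMP constraint back to the cotangent fibre at $\xJ$. Since the control $\^u$ is fixed along the singular arc, any adjoint covector $\tilde\lambda$ associated to $\^\xi|_{[\^\tau_1,\^\tau_2]}$ for the minimum time problem between $\xJ$ and $\^x_2$ satisfies the same Hamiltonian system $\dot{\tilde\lambda} = \vect{\^F_t}\*\tilde\lambda$ as $\^\lambda$, and is therefore determined by its value $\tilde\ell = (\tilde p,\xJ)\in T^*_{\xJ}\R^n$ at time $\^\tau_1$. The question becomes: which $\tilde p\in\cotsp$ are compatible with PMP along the whole singular arc?

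Two families of constraints need to be tracked. The normalisation $\^F_t\*\tilde\lambda\equiv 1$, together with the maximality condition (which forces $F_1\*\tilde\lambda\equiv 0$ on the singular arc, since $\^u$ takes interior values there), gives at time $\^\tau_1$ the pointwise equations $\scal{\tilde p}{f_0(\xJ)} = 1$ and $\scal{\tilde p}{f_1(\xJ)} = 0$. The remaining constraint $F_1\*\tilde\lambda(t) = 0$ for all $t\in[\^\tau_1,\^\tau_2]$, transported back to $\xJ$ by the Hamiltonian flow of $\vect{\^F_t}$, yields the $t$-family $\scal{\tilde p}{\gi{1}{t}(\xJ)} = 0$. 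Using $\gi{1}{\^\tau_1}(\xJ) = f_1(\xJ)$ and the integral representation $\gi{1}{t}(\xJ) = f_1(\xJ) + \int_{\^\tau_1}^t \dgi{1}{s}(\xJ)\,\ud s$, and recalling that every finite-dimensional subspace of $\R^n$ is closed, this family is in turn equivalent to $\scal{\tilde p}{f_1(\xJ)} = 0$ together with $\scal{\tilde p}{\dgi{1}{t}(\xJ)} = 0$ for every $t\in[\^\tau_1,\^\tau_2]$. Higher-order Poisson bracket conditions such as $F_{01}\*\tilde\lambda \equiv 0$ do not need to be imposed separately, as they follow from $F_1\*\tilde\lambda \equiv 0$ on an interval by differentiation in $t$.

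Putting everything together, the set of $\tilde p$'s compatible with PMP is precisely the affine subspace $\mrp + \CS^\perp$, which reduces to the singleton $\{\mrp\}$ if and only if $\CS^\perp = \{0\}$, i.e.\ $\CS = \tansp$. Exactly the same computation, carried out with the normalisation $\scal{\tilde p}{f_0(\xJ)} = 0$ in place of $1$, also shows that every abnormal adjoint covector corresponds to a nonzero element of $\CS^\perp$, so that the condition $\CS = \tansp$ simultaneously rules such adjoints out. The only mildly delicate step is the passage from the time-continuum of scalar equations $\scal{\tilde p}{\gi{1}{t}(\xJ)} = 0$ to the linear-span description of $\CS$, and this is precisely why both the integral representation of $\gi{1}{t}$ and the closedness of finite-dimensional subspaces are explicitly invoked.
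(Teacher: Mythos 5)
Your argument is correct and follows essentially the same route as the paper: both transport the adjoint covector back to the fibre over $\xJ$ via $\^S_{t*}\inv$, identify the freedom in the multiplier with $\CS^\perp$, and treat the normal and abnormal cases by the same orthogonality computation. The only step you gloss over is the reverse inclusion behind the word ``precisely'': to conclude that every $\tilde p \in \mrp + \CS^\perp$ really is an admissible covector you must check that $\scal{\tilde p}{\gi{0}{t}(\xJ)}=1$ for \emph{all} $t\in[\^\tau_1,\^\tau_2]$, not only at $\^\tau_1$; this holds because $\dgi{0}{t} = -\,\^u(t)\,\dgi{1}{t}$, which is exactly the computation the paper carries out in its second direction.
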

 \begin{proof}{\itshape $\CS = \tansp$ implies Assumption
     \ref{ass:uniq}. }
   Assume by contradiction that there exists a different adjoint
   covector $\mu(t) = \^\mu(t) + \omega(t) = (\mrp + \omega_1)\^S_{t\,
     *}\inv$ with associated multiplier $\pi_0 \in \{0, 1\}$. By
   \eqref{eq:max}
   \begin{equation}
     \label{eq:perp4}
     \scal{\omega_1}{\gi{1}{t}(\xJ)} = 0 \, , \qquad
     \scal{\omega_1}{\gi{0}{t}(\xJ)} = \pi_0 - 1
   \end{equation}
   which, for $\pi_0 = 1$ yield
   \begin{equation}
     \label{eq:perp5}
     \scal{\omega_1}{\dgi{1}{t}(\xJ)} = 0 \, , \qquad
     \scal{\omega_1}{f_1(\xJ)} = 0 \, , \qquad
     \scal{\omega_1}{f_0(\xJ)} = 0
   \end{equation}
   that is $\omega_1 \in \CS^\perp = \{0\}$.

   If $\pi_0 = 0$, then $\mrp + \omega \in \CS^\perp = \{0\}$ so that
   the new multiplier is the trivial one, a contradiction.

   \noindent{\itshape Assumption \ref{ass:uniq} implies $V =\tansp$.
   }
 %
   Suppose, by contradiction, that there exists $\omega \neq 0$,
   $\omega \in \CS^\perp$ so that
   \begin{equation*}
     \begin{split}
       & \scal{\omega}{\gi{1}{t}(\xJ)} = \scal{\omega}{f_1(\xJ) } +
       \int_{\^\tau_1}^t \scal{\omega}{\dgi{1}{s}(\xJ)} \ud s = 0, \\
       & \scal{\omega}{\gi{0}{t}(\xJ)} = \scal{\omega}{f_0 (\xJ) +
         \int_{\^\tau_1}^t \dgi{0}{s}(\xJ) \ud s} = -
       \int_{\^\tau_1}^t \^u(s) \scal{\omega}{\dgi{1}{s}(\xJ)} \ud s =
       0.
     \end{split}
   \end{equation*}
   Therefore $(\mrp+\omega)\^S_{t\, *}\inv$ is an adjoint covector
   along the singular arc of $\^\xi$ with multiplier $p_0 = 1$, a
   contradiction.
 \end{proof}

 \subsection{Consequences of coercivity and controllability}
 \label{sec:consequences}

 In order to exploit the coercivity assumption we follow \cite{SZ97}
 and we introduce the Lagrangian subspace and the Hamiltonian
 associated to the second variation \eqref{eq:intsolo},
 \eqref{eq:sistjsefinal}, respectively given by
 \begin{align}
   \begin{split}
     & L := \left\{ f_0(\xJ), \ f_1(\xJ) \right\}^\perp \times
     \Span\left\{ f_0(\xJ), \ f_1(\xJ)
     \right\} = \\
     & \phantom{L :}=  \R \vF{0}(\lrp) \oplus \R \vF{1}(\lrp) \oplus 
     \left( 
\{ f_0(\^ x_1),
     f_1(\^ x_1) \}^\perp  \times \{ 0 \}
\right),
   \end{split}\label{eq:lse} \\
   & H\se_t (\omega, \dx) := \dfrac{-1}{2 R(t)} \left(
     \scal{\omega}{\dgi{1}{t}(\xJ)} + \dx \cdot \dgi{1}{t}\cdot
     \beta(\xJ) \right)^2. \label{eq:Hse}
 \end{align}
 \begin{lemma}\label{le:coerc}
   Let $\cH\se_t \colon \cotsp \times \tansp \to \cotsp \times \tansp$
   be the flow of the Hamiltonian $H\se_t$ defined in \eqref{eq:Hse}.
   Under Assumptions \ref{ass:uniq} and \ref{ass:coerc} the kernel of
   the linear mapping $ \left. \pi_* \cH^{\se}_{\^\tau_2}
   \right\vert_{L} $ is trivial.
 \end{lemma}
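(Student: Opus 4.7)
The plan is to translate a putative kernel element into a variation on which the extended second variation vanishes, use Assumption \ref{ass:coerc} to conclude that this variation is trivial, and then appeal to Lemma \ref{le:unico} together with Assumption \ref{ass:uniq} to kill the covector component as well.

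First I would write out the Hamilton equations associated to $H^\se_t$. Since $H^\se_t$ is quadratic in $(\omega,\dx)$, the flow $\cH^\se_t$ is linear; for $(\omega_0,\dx_0)\in L$ I set $(\omega(t),\dx(t)) := \cH^\se_t(\omega_0,\dx_0)$ and
\[
w(t) := -\frac{1}{R(t)}\Bigl( \scal{\omega(t)}{\dgi{1}{t}(\xJ)} + \dx(t)\cdot \dgi{1}{t}\cdot \beta(\xJ) \Bigr).
\]
A direct computation from \eqref{eq:Hse} then gives $\dot{\dx}(t) = w(t)\,\dgi{1}{t}(\xJ)$, together with a dual equation for $\omega$. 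The decomposition \eqref{eq:lse} of $L$ yields $\dx_0 = \e_0 f_0(\xJ) + \e_1 f_1(\xJ)$ for some $\e_0,\e_1\in\R$ and $\omega_0 \in \Span\{f_0(\xJ),f_1(\xJ)\}^\perp$. Hence, assuming $\pi_*\cH^\se_{\^\tau_2}(\omega_0,\dx_0) = 0$, i.e.~$\dx(\^\tau_2) = 0$, the triplet $(\e_0,\e_1,w)$ satisfies \eqref{eq:sistjsefinal} and therefore belongs to $\cW$.

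Second, I would evaluate $J^\se_\ext(\e_0,\e_1,w)$ via integration by parts. Using the defining relation $w(t)R(t) = -\scal{\omega(t)}{\dgi{1}{t}(\xJ)} - \dx(t)\cdot \dgi{1}{t}\cdot \beta(\xJ)$ together with the two Hamilton equations, a short computation shows that the integrand in \eqref{eq:intsolo} equals $-\tfrac{1}{2}\tfrac{d}{dt}\scal{\omega(t)}{\dx(t)}$, so that
\[
2J^\se_\ext(\e_0,\e_1,w) = \scal{\omega_0}{\dx_0} - \scal{\omega(\^\tau_2)}{\dx(\^\tau_2)} = 0,
\]
the right-hand side vanishing because $\omega_0\perp\Span\{f_0(\xJ),f_1(\xJ)\}$, $\dx_0\in\Span\{f_0(\xJ),f_1(\xJ)\}$ and $\dx(\^\tau_2)=0$. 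Assumption \ref{ass:coerc} then forces $(\e_0,\e_1,w)=0$, whence $\dx\equiv 0$ and $\omega(t)\equiv \omega_0$ on $[\^\tau_1,\^\tau_2]$.

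Finally, with $w\equiv 0$ and $\dx\equiv 0$, the defining equation of $w$ collapses to $\scal{\omega_0}{\dgi{1}{t}(\xJ)} = 0$ for every $t\in[\^\tau_1,\^\tau_2]$. Combined with $\omega_0 \in \Span\{f_0(\xJ),f_1(\xJ)\}^\perp$, this says precisely that $\omega_0\in \CS^\perp$, and Lemma \ref{le:unico} together with Assumption \ref{ass:uniq} yields $\omega_0 = 0$. The only real technical point is checking the integration-by-parts identity with the correct pairing between vectors and covectors; once that is verified, the argument reduces to bookkeeping.
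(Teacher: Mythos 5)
Your proof is correct, and the only place it diverges from the paper's is in how the first half is justified. The paper quotes the characterisation of coercivity from \cite{SZ97} (condition \eqref{eq:coerc}) as a black box: a kernel element of $\left.\pi_*\cH\se_{\^\tau_2}\right\vert_L$ immediately gives $\dx=0$ and a constant flow, after which the Hamilton equations yield $\scal{\omega}{\dgi{1}{t}(\xJ)}=0$ and Assumption \ref{ass:uniq} finishes. You instead prove the implication you need directly: the identity that the integrand of \eqref{eq:intsolo} equals $-\tfrac12\ddt\scal{\omega(t)}{\dx(t)}$ does check out against the sign conventions of \eqref{eq:Hse} (using $w=-A/R$ with $A(t)=\scal{\omega(t)}{\dgi{1}{t}(\xJ)}+\dx(t)\cdot\dgi{1}{t}\cdot\beta(\xJ)$, one gets $\ddt\scal{\omega}{\dx}=-w^2R-2w\,\dx\cdot\dgi{1}{t}\cdot\beta(\xJ)$), the boundary terms vanish because $L=\{f_0(\xJ),f_1(\xJ)\}^\perp\times\Span\{f_0(\xJ),f_1(\xJ)\}$ and $\dx(\^\tau_2)=0$, and the resulting triplet does lie in $\cW$ since $w$ is continuous. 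So coercivity kills $(\e_0,\e_1,w)$, and the endgame ($w\equiv0$ collapses to $\omega_0\in\CS^\perp$, then Lemma \ref{le:unico}) is identical to the paper's. In effect you have reproduced, in this special case, the proof of the direction of the \cite{SZ97} equivalence that the lemma needs; this buys a self-contained argument at the cost of a page of bookkeeping, while the paper's citation buys brevity but also the stronger "for all $t$" version of \eqref{eq:coerc}, which is not needed here.
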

 \begin{proof}
   It is an easy consequence of the results in \cite{SZ97} that the
   quadratic form $J\se_{\ext}$ is coercive if and only if for all
   $(\omega, \dx) \in L$ and all $t \in [\^\tau_1, \^\tau_2]$
   \begin{equation}
     \label{eq:coerc}
     \pi_*\cH\se_t(\omega, \dx) = 0 \quad \text{ implies } \quad \begin{cases}\dx =0 \\
       \cH\se_s(\omega,0) = (\omega, 0) \quad \forall s \in [\^\tau_1, t].
     \end{cases}
   \end{equation}
   Ler $(\omega, \dx) \in \ker \left. \pi_* \cH^{\se}_{\^\tau_2}
   \right\vert_{L} $. By \eqref{eq:coerc} $\dx = 0$, $\omega \in
   \left\{ f_0(\^ x_1), f_1 (\^ x_1)\right\}^\perp$ and $\left(
     \mu(t), \zeta(t)\right) := \cH^{\se}_t (\omega, \dx ) = (\omega,
   0)$ for any $t \in [\^\tau_1, \^\tau_2]$. Since the equations for
   $\left( \mu(t), \zeta(t) \right) $ are
   \begin{align}
     & \dot\mu(t) = \dfrac{1}{R(t)} \Big(
     \scal{\mu(t)}{\dgi{1}{t}(\xJ)} + \zeta(t) \cdot \dgi{1}{t}\cdot
     \beta(\xJ)
     \Big) \left( \cdot \right) \cdot \dgi{1}{t} \cdot \beta(\xJ) \\
     & \dot\zeta(t) = \dfrac{-1}{R(t)} \Big(
     \scal{\mu(t)}{\dgi{1}{t}(\xJ)} + \zeta(t) \cdot \dgi{1}{t}\cdot
     \beta(\xJ) \Big) \dgi{1}{t} (\xJ) .
   \end{align}
   we get $\scal{\omega}{\dot g^1_t(\^x_1)} = 0$ for all $t \in [\^\tau_1, \^\tau_2]$. Thus, Assumption
   \ref{ass:uniq} yields the claim.
 \end{proof}

 \section{The main results}
 In this Section we state the main results of the paper, Theorem
 \ref{thm:main} and \ref{thm:uniq}, which will be proven in the
 following Section, and provide an example.
 \label{sec:main}
 \begin{theorem}
   \label{thm:main}
   Under Assumptions \ref{ass:PMP}--\ref{ass:coerc}, there exists
   $\raggio > 0$ such that for any $r$, $\norm{r}< \raggio$, problem $\Pbr$
   has a bang-singular-bang strict (time, state)-local optimiser
   $\xi^{r}$. The switching times and the final time of $\xi^r$ depend
   smoothly on $r$.  If $\^{\xi}$ is injective, then $\xi^r$ is a
   state-local optimiser of $\Pbr$.
 \end{theorem}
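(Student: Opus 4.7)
My plan is to combine the Hamiltonian viewpoint with the implicit function theorem, reducing the existence of the perturbed extremal to finding a zero of a smooth finite-dimensional map, and then to propagate the sufficient optimality conditions of \cite{PS11} to $\Pbr$.

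As unknowns I take the initial covector $\mu_0 \in \cotsp$ together with the three times $\tau_1, \tau_2, T$. For each $r$ and each choice of these data, I build a candidate adjoint-state curve $\lambda^r$ by concatenation: on $[0,\tau_1]$ flow from $(\mu_0, a^r)$ with $\vH{1}^r$; on $[\tau_1, \tau_2]$ flow with the singular Hamiltonian $F_0^r + u^r_{\mathrm{sing}} F_1^r$, where the feedback $u^r_{\mathrm{sing}} = -F^r_{001}/F^r_{101}$ is well defined near the reference singular arc by \eqref{eq:SGLC} and continuity in $r$, and remains in $(-1,1)$ for small $\norm{r}$ since $\^u$ does on $(\^\tau_1,\^\tau_2)$; on $[\tau_2, T]$ flow with $\vH{2}^r$. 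I then define
\[
\Phi(r,\mu_0,\tau_1,\tau_2,T) := \bigl( F_1^r(\lambda^r(\tau_1)),\; F_{01}^r(\lambda^r(\tau_1)),\; \pi(\lambda^r(T)) - b^r,\; H^{\max, r}(\lambda^r(0)) - 1 \bigr) \in \R^{n+3}.
\]
By Assumption \ref{ass:PMP} and \eqref{eq:H01}, $\Phi$ vanishes at $(0,\mri,\^\tau_1,\^\tau_2,\^T)$, and any nearby zero yields a normal bang-singular-bang Pontryagin extremal of $\Pbr$ with the reference switching structure.

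The heart of the argument is invertibility of $D_{(\mu_0,\tau_1,\tau_2,T)}\Phi$ at the reference, which I expect to be the main obstacle. The rows involving $F_1^r, F_{01}^r$ at $\tau_1$ are handled by Assumptions \ref{ass:regbang} and \ref{ass:regjun}: along $\vH{1}$ one has $\frac{\ud}{\ud t}F_1\*\lambda = F_{01}\*\lambda$, whose further derivative at $\lrp$ equals $(u_1 F_{001}+F_{101})(\lrp) > 0$, so variations of $\tau_1$ move $F_{01}$ transversally; an analogous remark applies at $\lrs$ through the last bang arc. The normalization row is handled by the transversality between $\ud H^{\max,r}$ and the other constraints. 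For the endpoint block, propagating variations of $(\mu_0, \tau_2)$ through the three Hamiltonian flows and applying the coordinate-free Goh transformation of \cite{PS11} identifies the linearisation on $[\^\tau_1, \^\tau_2]$ with the flow $\cH\se$, so that any nontrivial element of the kernel of $D_{(\mu_0,\tau_1,\tau_2,T)}\Phi$ produces a nontrivial element of $\ker(\pi_*\cH^{\se}_{\^\tau_2}|_L)$, contradicting Lemma \ref{le:coerc}. The delicate point is the symplectic bookkeeping required to bring the endpoint variation into the Lagrangian subspace $L$ defined in \eqref{eq:lse} and to cancel the two bang-arc contributions against the Jacobi equation of the singular arc.

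Once the Jacobian is invertible, the implicit function theorem yields a unique smooth family $(\mu_0^r, \tau_1^r, \tau_2^r, T^r)$ with $\Phi = 0$, hence a smooth family of bang-singular-bang extremals $\xi^r$ of $\Pbr$. To promote extremality to strict (time, state)-local optimality, I would note that the strict inequalities of Assumptions \ref{ass:regbang}, \ref{ass:regjun}, of \eqref{eq:SGLC} and of \eqref{eq:BL}, as well as coercivity of the extended second variation \eqref{eq:intsolo}, are open conditions on continuously varying data and therefore persist for $\norm{r}$ small; the sufficient optimality theorem of \cite{PS11} then applies to $\xi^r$ and yields the first claim. The final statement, upgrading (time, state)-local to state-local optimality when $\^\xi$ is injective, follows by a standard compactness-and-tube argument: injectivity of $\^\xi$ on the compact interval $[0,\^T]$ produces a continuous inverse on its range, so for $\norm{r}$ small every admissible trajectory of $\Pbr$ with graph in a sufficiently narrow spatial tube around the range of $\^\xi$ is automatically also time-localised, reducing state-local optimality to the (time, state)-local case already established.
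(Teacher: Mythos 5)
Your existence argument is essentially the paper's: the same unknowns $(\omega,\tau_1,\tau_2,T)$, the same defining equations (the paper's map $\Psi$ in \eqref{eq:impl} imposes $\Fr{1}=\Fr{01}=0$ and $\Fr{0}=1$ at the first switching point, equivalent to your normalisation via $H^{\max,r}$), and the same kernel analysis: variations are transported along the singular arc, brought into the Lagrangian subspace $L$ via Lemma \ref{le:trasporti}, and killed by Corollary \ref{cor:proie}, i.e.\ by Lemma \ref{le:coerc}. The ``delicate symplectic bookkeeping'' you defer is exactly equations \eqref{eq:impl2a}--\eqref{eq:impl3} in the paper, and it goes through. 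So the first claim of the theorem is on solid ground modulo that computation.

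There are, however, two genuine gaps in the optimality half. First, you dismiss persistence of coercivity as ``an open condition on continuously varying data.'' That is not automatic here: the perturbed second variation $J^r_\ext$ lives on a subspace $\cW^r$ of $\R^2\times L^2([\tau_1(r),\tau_2(r)])$ whose underlying interval and linear constraint \eqref{eq:sistjsefinalr} both move with $r$, and coercivity of a quadratic form on a varying infinite-dimensional constrained subspace does not follow from pointwise convergence of the coefficients. The paper spends all of Lemma \ref{le:varsecr} on this, via a contradiction argument with weakly convergent minimising sequences, lower semicontinuity of the Legendre term $\int w^2 R$, and an appeal to \cite{Hes66}; some such argument is unavoidable. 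Second, your reduction of state-local to (time,state)-local optimality is flawed: a competitor whose \emph{range} lies in a narrow tube around the range of $\^\xi$ need not have its \emph{graph} near the graph of $\xi^r$, since nothing forces it to traverse the tube with the same time-parametrisation (it could dwell, or reach a neighbourhood of $b^r$ early and approach it slowly). Injectivity of $\^\xi$ alone does not ``automatically time-localise'' such competitors. The paper instead proves (Lemma \ref{le:simpler}, by a sequential compactness argument that does use injectivity of $\^\xi$) that the perturbed trajectory $\xi^r$ is itself injective for small $\norm{r}$, and then invokes the sufficiency machinery of \cite{PS11}, where injectivity of the trajectory under consideration is precisely the hypothesis that lets the Hamiltonian/clock-form construction be carried out over a spatial tube rather than over a graph neighbourhood. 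You need to prove injectivity of $\xi^r$, not just use injectivity of $\^\xi$.
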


 First we prove the existence of the bang-singular-bang extremal
 trajectory $\xi^r$, by Hamiltonian methods and the implicit function
 theorem (see Lemma \ref{le:fond}). Then the optimality of $\xi^r$ is
 proven by showing -- via standard methods of functional analysis --
 that the coercivity and the injectivity conditions are stable under
 small perturbations of the parameter $r$, see Lemmata
 \ref{le:varsecr} and \ref{le:simpler}.

 We point out that using the implicit function theorem allows to
 perform a sensitivity analysis in a standard way; this will be the
 object of a future analysis.

 Furthermore we prove the uniqueness of the extremal pair $\lambda^r =
 \left( \mu^r, \xi^r \right)$, defined in Theorem \ref{thm:main}, in a
 suitable neighbourhood of the graph of the nominal pair $\^\lambda$.

\begin{theorem}\label{thm:uniq}
  Under Assumptions \ref{ass:PMP}--\ref{ass:coerc}, there exist $\raggio > 0
  $, $\e > 0$ and a neighbourhood $\cV$ of the graph of $\^\lambda$ in
  $\R \times \fibcot$ such that for any $r$, $\norm{r} < \raggio$, the
  extremal pair $\lambda^r$ associated to the local optimiser $\xi^r$
  of Theorem \ref{thm:main} is the only extremal pair of $\Pbr$ whose
  graph is in $\cV$ and whose final time is in $[\^T - \e , \^T +
  \e]$.
\end{theorem}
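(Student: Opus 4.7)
The strategy is to establish structural rigidity: any extremal pair $\lambda = (\mu, \xi)$ of $\Pbr$ whose graph lies in a sufficiently small tubular neighbourhood $\cV$ of $\Graph{\^\lambda}$ and whose final time $T$ satisfies $\abs{T-\^T}\le\e$ must itself be bang--singular--bang with the same arc ordering as the nominal triplet. Once this rigidity is established, $\lambda$ together with its two switching times is a zero of the same finite-dimensional matching system whose unique local root was produced, via the implicit function theorem, in Lemma \ref{le:fond}; uniqueness there will force $\lambda=\lambda^r$.

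First I would identify the two bang portions. By continuity of the strict inequalities in Assumption \ref{ass:regbang}, for $\cV$ and $\raggio$ small enough, $u_i F_1^r \circ \lambda$ is bounded below by a positive constant on initial and terminal sub-intervals of $[0,T]$, so the maximum condition \eqref{eq:max} forces $u \equiv u_1$ at the beginning and $u \equiv u_2$ at the end. Next I would locate the candidate junction instants: the reference switching function $F_1 \circ \^\lambda$ vanishes at $\^\tau_1$ and $\^\tau_2$, and Assumption \ref{ass:regjun} together with \eqref{eq:H01} supplies a second-order transversality of sign $u_i$ there. The implicit function theorem then produces, for $r$ small, unique times $\tau_1^r < \tau_2^r$ close to $\^\tau_1, \^\tau_2$ at which the perturbed switching function $F_1^r \circ \lambda$ undergoes the corresponding transitions, with smooth dependence on $r$.

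The main obstacle is ruling out spurious switching inside $[\tau_1^r, \tau_2^r]$, i.e.\ showing $F_1^r \circ \lambda \equiv 0$ on the whole middle interval. If this failed, the maximum condition would impose $u \equiv \pm 1$ on some sub-interval, producing at least one additional transversal crossing of the switching function. Uniform positivity $F_{101}^r \circ \lambda > 0$, inherited from \eqref{eq:SGLC} by continuity after further shrinking $\cV$ and $\raggio$, together with the identities $\ddt F_1^r \circ \lambda = F_{01}^r \circ \lambda$ and $\ddt F_{01}^r \circ \lambda = (F_{001}^r + u F_{101}^r)\circ\lambda$ valid along any extremal, shows that such extra crossings are incompatible with $\lambda$ remaining in $\cV$: any bang-to-singular re-entry would force the ratio $F_{01}^r\circ\lambda/F_{101}^r\circ\lambda$ to leave a prescribed $O(\cV)$ neighbourhood of zero, a contradiction.

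Once $F_1^r \circ \lambda \equiv 0$ on the middle arc, differentiation yields $F_{01}^r \circ \lambda \equiv 0$ and the singular control is pinned down by $u(t) = -F_{001}^r / F_{101}^r \circ \lambda(t)$; in particular $\lambda$ has the desired bang--singular--bang structure. The triple $(\lambda, \tau_1^r, \tau_2^r)$ then satisfies the same nonlinear matching system solved in Lemma \ref{le:fond}, so the uniqueness furnished by the implicit function theorem there delivers $\lambda = \lambda^r$, concluding the proof.
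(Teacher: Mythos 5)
Your overall architecture coincides with the paper's: show that any extremal whose graph lies in a small tube around $\Graph{\^\lambda}$ with final time near $\^T$ is forced to be bang--singular--bang with the same arc ordering, and then invoke the uniqueness statement of Lemma \ref{le:fond} to conclude that it equals $\lambda^r$. The identification of the outer bang arcs from Assumption \ref{ass:regbang} by continuity, and the final reduction to Lemma \ref{le:fond}, are exactly what the paper does.

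There is, however, a genuine gap in the middle of your plan. You propose to locate the junction instants of the \emph{unknown} extremal $\lambda$ by applying the implicit function theorem to the switching function near $\^\tau_1$ and $\^\tau_2$. This cannot work as stated: the nominal switching function $F_1\*\^\lambda$ has a \emph{degenerate} zero at $\^\tau_1$ --- both $F_1\*\^\lambda(\^\tau_1)$ and its derivative $F_{01}\*\^\lambda(\^\tau_1)$ vanish (see \eqref{eq:H1} and \eqref{eq:H01}), the function behaving like $\tfrac12(u_1F_{001}+F_{101})(\lrp)(t-\^\tau_1)^2$ on the left and vanishing identically on $[\^\tau_1,\^\tau_2]$ --- so there is no simple zero to perturb, and a second-order zero can a priori split into zero, one, or two nearby zeros, or an interval of zeros, under perturbation. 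The paper's proof replaces this step by purely quantitative second-order Taylor estimates: it fixes $\delta$, extracts uniform constants from Assumption \ref{ass:regjun} and \eqref{eq:SGLC} (the quantities $\Maxdd(\delta)$, $\mindd(\delta)$, $\Theta$, $u_M$ in \eqref{eq:M1m1a1} and \eqref{eq:ThetauM}), shrinks $\cO_\delta(\lri)$ and $\raggio_\delta$ so that \eqref{eq:disugua1}--\eqref{eq:disugua2bis} hold, and then proves (Steps 1--2) that $\Fr{1}\*\~\lambda$ must vanish at some first time $\~\tau_1$ and that $\abs{\~\tau_1-\^\tau_1}<\e$ by solving the quadratic inequality coming from the Taylor expansion. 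Relatedly, your exclusion of spurious interior switchings is gestured at via ``the ratio $F_{01}^r/F_{101}^r$ would leave a neighbourhood of zero,'' but the actual mechanism is convexity: on a maximal subinterval $(t_1,t_2)$ of the middle region where $\Fr{1}\*\~\lambda\neq 0$, one has $\~u_I\Fr{01}\*\~\lambda(t_1)\geq 0$ and, since $\abs{\Fr{001}/\Fr{101}}<1$ near $\Sigmar$, a uniform lower bound $(\~u_I\Fr{001}+\Fr{101})\*\~\lambda\geq \Theta(1-u_M)/8>0$; hence $\~u_I\Fr{1}\*\~\lambda(t)\geq c\,(t-t_1)^2$ and the switching function can never return to zero before $\^\tau_2+\delta$ (inequality \eqref{eq:intermaxi}). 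You also omit the preliminary step showing that the switching function must vanish \emph{at all} (the extremal cannot remain bang throughout the middle interval), which again rests on these integral estimates. Without replacing your IFT step by arguments of this quantitative kind, the proof does not close.
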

The proof of this result is quite technical and is given in Section
\ref{sec:pfuniq}, we conclude this section with an example.

\subsection{Dubins car}
\label{sec:dubins}
A classical minimum time problem is the so-called Dubins car problem,
where the dynamics describes the motion of a car moving in a plane
with fixed speed and with bounded, controlled angular velocity. The
car has to be steered from a given initial position $(x_0, y_0)$ and
orientation $\theta_0$ to a prescribed final position $(x_f, y_f)$ and
orientation $\theta_f$. Namely the problem is
\begin{equation}
  \label{eq:dubins}
  \begin{split}
    &\text{minimise } \ T \ \text{ subject to} \\
    & \dot x(t) = \cos\theta(t), \quad \dot y(t) = \sin \theta(t),
    \quad
    \dot \theta(t) = u(t) , \\
    & (x(0), y(0), \theta(0)) = (x_0, y_0, \theta_0), \quad (x(T),
    y(T), \theta(T)) = (x_f, y_f, \theta_f), \\
    & \abs{u(t)} \leq 1 .
  \end{split}
\end{equation}
It can be proven that the only singular control is $u \equiv 0$ and
that, if the initial and final positions on the $(x, y)$-plane are
sufficiently far, then the optimal trajectory is bang-singular-bang,
see e.g.~\cite{AS04}.  This example fits our assumptions with $f_0(x,
y, \theta) = \left(\cos\theta, \sin\theta, 0\right)^t$ and $f_1(x, y,
\theta) = \left( 0,0,1 \right)^t$. An easy computation shows that both
Assumptions \ref{ass:regbang} and \ref{ass:regjun} are satisfied. In
\cite{PS08} it is shown that the second variation associated to any
singular trajectory between two fixed end points is
coercive. Moreover, since $\Span\{ f_0, f_1, f_{01} \}(x, y, \theta) =
\R^3$ for any $ (x, y, \theta) \in \R^3$, also Assumption
\ref{ass:uniq} is trivially satisfied.  Thus the bang-singular-bang
structure of optimisers in the Dubins car problem is stable under
small perturbations of the data of the problem.

When the final orientation $\theta(T)$ is not prescribed, the problem
is also quoted as Dodgem car problem, see e.g.~\cite{Cra95}.  In this
case when the initial and final positions on the $(x, y)-$plane are
sufficiently far, optimal trajectories are the concatenation of a bang
and of a singular arc. The same assumptions stated here for
bang-singular-bang extremals yield both optimality and stability of
such trajectories, provided that the perturbed final constraint is an integral line
of the perturbed controlled vector field.
Some
preliminary results are in \cite{PS08} and \cite{PS12}. Complete
proofs will appear in \cite{PS13}.

\section{Proof of the results}
\label{sec:proof}

\subsection{Hamiltonian approach}\label{sub:hami}
In this section we describe some properties of the Hamiltonians linked
to our system near the singular arc of the reference extremal, for
more details see \cite{PS11}.

By \eqref{eq:H1}, \eqref{eq:H01} and \eqref{eq:SGLC}, any singular
extremal of the first kind of $\Pbz$ belongs to the set
\[
\cS := \{\ell \in T^*M \colon F_1(\ell)= F_{01}(\ell)= 0 ,
F_{101}(\ell)>0 \},
\]
a subset of $ \Sigma := \{\ell\in T^*M \colon F_1(\ell)=0 \}$, where
the maximised Hamiltonian of $\Pbz$, $H^{\max}$, coincides with every
Hamiltonian $F_0 + u F_1$, $u \in \R$.

Notice that $\cS$ and $\Sigma$ are independent of the control
constraints but, by \eqref{eq:4r}, \eqref{eq:H01} and Remark
\ref{re:refcont}, any singular extremal of problem $\Pbz$ is in
\[
\cS\cap \left\{\ell \in T^*M \colon \abs{ \frac{
      F_{001}}{F_{101}}(\ell) } < 1 \right \}.
\]
The following results are proven in Lemmata 2 and 3 of \cite{PS11}:
\begin{lemma}\label{le:geo}
  If \eqref{eq:SGLC} holds, then there exists a neighbourhood $\cV$ of
  $\cS$ in $\cotbun$ where the following statements hold true.
  \begin{enumerate}
  \item $\Sigma \cap \cV$ is a hyper--surface and $\cS \cap \cV$ is a
    $(2n - 2)$-dimensional symplectic manifold.  Moreover $\Sigma$
    separates the regions defined by: $H^{\max}=F_0 + F_1$, $H^{\max}
    = F_0 - F_1$.
  \item The Hamiltonian vector field $\vF{1}$ is tangent to $\Sigma$
    and transverse to $\cS$, while $\vF{01}$ is transverse to
    $\Sigma$.
  \item Setting $v := \dfrac{-F_{001}}{F_{101}}$ we obtain {\it the
      Hamiltonian of singular extremals of the first kind}
    \[
    F^{S} := F_0 + v \,F_1 ,
    \]
    i.e.~the associated vector field $\vFS$ is tangent to $\cS$ and
    any singular extremal of the first kind of $\Pbz$ is an integral
    curve of $\vFS$ contained in $\cS$.
  \item There exists a non-negative smooth Hamiltonian $\chi \colon
    \cV \to [0, +\infty)$ such that
    \begin{enumerate}
    \item $\chi = 0$, $\vchi = 0 \ $ and $\ \uD^2 \chi =
      \dfrac{1}{F_{101}}\uD F_{01} \otimes \uD F_{01} \ $ on $\ \cS$;
    \item $\vF{0} + \vchi$ is tangent to $\Sigma$.
    \end{enumerate}
  \end{enumerate}
\end{lemma}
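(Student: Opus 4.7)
My plan is to deduce Parts (1)--(3) directly from Poisson-bracket computations powered by the inequality $F_{101} > 0$ on $\cS$, and to construct the auxiliary Hamiltonian $\chi$ in Part (4) via a normal-form expansion in the pair of functions $(F_1, F_{01})$. For (1), the identity $F_{101} = \{F_1, F_{01}\}$ combined with \eqref{eq:SGLC} forces $\uD F_1 \neq 0$ on $\cS$, so $\Sigma = \{F_1 = 0\}$ is a smooth hypersurface in some neighbourhood $\cV$ of $\cS$; the differentials $\uD F_1$ and $\uD F_{01}$ are linearly independent on $\cS$ (a nontrivial combination evaluated on $\vF{1}$ would contradict $F_{101}\neq 0$), so $\cS$ is a codimension-$2$ submanifold of $\cV$. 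The Poisson matrix of the defining functions equals $F_{101}\, J$ with $J$ the standard $2\times 2$ symplectic block, so $\cS$ is symplectic. That $\Sigma$ separates $\{H^{\max} = F_0 + F_1\}$ from $\{H^{\max} = F_0 - F_1\}$ is immediate from $H^{\max} = F_0 + \abs{F_1}$. For (2), the identities $\vF{1}\cdot F_1 = 0$, $\vF{1}\cdot F_{01} = F_{101} > 0$ and $\vF{01}\cdot F_1 = -F_{101} < 0$ on $\cS$ deliver the stated tangency and transversality.

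For Part (3), a bracket computation (Leibniz applied to $vF_1$) yields
\[
\vFS\cdot F_1 = F_{01} + F_1\{v, F_1\}, \qquad \vFS\cdot F_{01} = F_{001} + v\,F_{101} + F_1\{v, F_{01}\},
\]
both of which vanish on $\cS$ once $v = -F_{001}/F_{101}$ is substituted, so $\vFS$ is tangent to $\cS$. Along any singular extremal $\lambda$ of the first kind, differentiating $F_{01}\circ\lambda\equiv 0$ along $\dot\lambda = \vF{0}\circ\lambda + u(\cdot)\,\vF{1}\circ\lambda$ yields $(F_{001} + u\,F_{101})\circ\lambda\equiv 0$, whence $u(t) = v(\lambda(t))$ and $\lambda$ is an integral curve of $\vFS$.

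For Part (4), I would seek $\chi$ in the form $\chi = \dfrac{F_{01}^2}{2F_{101}} + \sum_{k\geq 3}\dfrac{A_k\,F_{01}^k}{F_{101}^{k-1}}$ with smooth coefficients $A_k$ determined recursively from iterated brackets of $F_0, F_1$. The quadratic term alone secures $\chi = 0$, $\uD\chi = 0$ and the prescribed Hessian $\tfrac{1}{F_{101}}\,\uD F_{01}\otimes\uD F_{01}$ on $\cS$; a short computation gives $\{\tfrac{F_{01}^2}{2F_{101}}, F_1\} = -F_{01} - \tfrac{F_{01}^2}{2F_{101}^2}\{F_{101}, F_1\}$, so the tangency condition $\{F_0 + \chi, F_1\} \equiv 0$ on $\Sigma$ is already satisfied modulo an error of order $F_{01}^2$. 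The cubic correction with $A_3 = -\tfrac{1}{6F_{101}}\{F_{101}, F_1\}$ kills that error, and each subsequent $A_k$ is determined by the analogous algebraic condition at the next order, always with a positive power of $F_{101}$ in the denominator. The main obstacle is to verify that this recursion closes consistently, that the resulting formal series can be realised as an honest smooth nonnegative Hamiltonian on $\cV$ (via a Borel-type summation followed by a smooth cutoff), and that the construction survives without spoiling the previously verified identities on $\cS$ or the tangency to $\Sigma$. This technical piece is precisely the content of the lemmas from \cite{PS11} that the present statement invokes.
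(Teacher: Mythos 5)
The paper does not actually prove this lemma: it is imported verbatim from Lemmata 2 and 3 of \cite{PS11}, so there is no in-paper argument to compare against. Judged on its own terms, your treatment of Claims 1--3 is correct and is the standard one: $F_{101}\neq 0$ forces $\uD F_1\neq 0$ and the independence of $\uD F_1,\uD F_{01}$ on $\cS$; the Poisson matrix $F_{101}\bigl(\begin{smallmatrix}0&1\\-1&0\end{smallmatrix}\bigr)$ of the defining functions gives the symplectic character of $\cS$; the separation statement is just $H^{\max}=F_0+\abs{F_1}$; and the bracket identities $\vF{1}\cdot F_{01}=F_{101}$, $\vF{01}\cdot F_1=-F_{101}$, $\vFS\cdot F_1=F_{01}+F_1\{v,F_1\}$, $\vFS\cdot F_{01}=F_{001}+vF_{101}+F_1\{v,F_{01}\}$ settle Claims 2 and 3 (note $\vFS=\vF{0}+v\vF{1}$ \emph{on} $\cS$ since $F_1=0$ there, which is what identifies singular extremals with integral curves of $\vFS$).

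The gap is in Claim 4(b). Tangency of $\vF{0}+\vchi$ to $\Sigma$ is the \emph{exact} identity $\{F_0+\chi,F_1\}=0$ at every point of $\Sigma\cap\cV$, not merely to infinite order along $\cS$. Your construction produces a formal power series in $F_{01}$ whose partial sums satisfy the identity only modulo increasing powers of $F_{01}$; a Borel-type summation then yields a smooth $\chi$ for which $\{F_0+\chi,F_1\}\vert_\Sigma$ is flat along $\cS$ but not identically zero on $\Sigma\cap\cV$, and a cutoff does not repair this. Since the later Lemma \ref{le:trasporti} uses exact tangency of $\overrightarrow{\^F}_t+\vchi$ to $\Sigma$ (so that its flow preserves $\Sigma$), a flat error is not acceptable. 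The fix is to abandon the series: the requirement is the non-characteristic linear equation $\vF{1}\cdot\chi=F_{01}$ posed \emph{on} $\Sigma$ (legitimate because $\vF{1}$ is tangent to $\Sigma$), and since $\vF{1}\cdot F_{01}=F_{101}>0$, the submanifold $\cS=\{F_{01}=0\}\cap\Sigma$ is a cross-section of the flow of $\vF{1}$ in $\Sigma$. Integrating $F_{01}$ along that flow with initial value $0$ on $\cS$ gives an exact solution on $\Sigma\cap\cV$; it is automatically nonnegative ($F_{01}$ changes sign with the flow parameter, so $\chi$ has a strict minimum $0$ on $\cS$), has the prescribed Hessian (check it on $\vF{1}$, where both sides give $F_{101}$), and can then be extended off $\Sigma$ as a nonnegative smooth function vanishing to second order on $\cS$. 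This closed-form construction is what the cited lemmas of \cite{PS11} provide; your leading term $F_{01}^2/(2F_{101})$ is its second-order jet, but the recursion-plus-resummation route does not by itself deliver the statement.
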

\noindent From now on we shall denote $\Sigma \cap \cV$ and $\cS \cap
\cV$ as $\Sigma$ and $\cS$, respectively.
 
\noindent Since for the nominal problem \eqref{eq:SGLC} holds true in
the neighbourhood $\cV$ of $\left. \^\lambda \right\vert_{[\^\tau_1,
  \^\tau_2]}$ defined in Lemma \ref{le:geo}, then possibly restricting
$\cV$ and for small enough $\norm{r}$, \eqref{eq:SGLC} holds also for
the Hamiltonians $F_{101}^r$. Therefore we can define, in $\cV$, the
Hamiltonians of singular extremals of $\Pbr$
\begin{equation*}
  F^{S, r} := F_0^r - \dfrac{F_{001}^r}{F_{101}^r}F_1^r .
\end{equation*}
 
In order to prove our main result we are going to use the following
result from \cite{PS11}.
\begin{lemma}\label{le:trasporti}
  If \eqref{eq:SGLC} holds, then the Hamiltonian vector field
  $\overrightarrow{\^H}_t := \overrightarrow{\^F}_t + \vchi$ is
  tangent to $\Sigma$. For any $t \in \left[ \^\tau_1, \^\tau_2
  \right]$ the derivative of its flow $\^\cH_t$ satisfies the
  following properties:
  \begin{enumerate}
  \item $\^\cH_{t*}\vF{1}(\lrp) = \vF{1}(\^\lambda(t))$ and
    $\^\cH_{t*}\vF{0}(\lrp) = \vF{0}(\^\lambda(t))$
  \item If $\dl_S \in T_{\lrp}\cS$ then
    \[
    \cF^S_{t*}\dl_S = \^\cH_{t*}\dl_S + a(t, \dl_S)
    \vF{1}(\^\lambda(t))
    \]
    where $\cF^S_t$ is the flow of $\vFS$ and $ a(t, \dl_S) :=
    \displaystyle\int_0^t\scal{\uD
      v(\^\lambda(s))}{\cF^S_{t*}\dl_S}\ud s $.
  \end{enumerate}
\end{lemma}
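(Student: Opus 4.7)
The plan is to reduce both assertions to pointwise computations along the reference trajectory $\left.\^\lambda\right\vert_{[\^\tau_1,\^\tau_2]}\subset \cS$, exploiting the structural properties of $\chi$ from Lemma~\ref{le:geo}(4), in particular the Hessian identity $\uD^2\chi = \frac{1}{F_{101}}\uD F_{01}\otimes\uD F_{01}$ on $\cS$. Tangency of $\overrightarrow{\^H}_t$ to $\Sigma$ is immediate: on the singular arc $\overrightarrow{\^H}_t = (\vF{0}+\vchi)+\^u(t)\vF{1}$, and each summand is tangent to $\Sigma$ by Lemma~\ref{le:geo}(2) and (4b) respectively.

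For item~1 I would use the standard identity $\frac{\ud}{\ud t}\^\cH_t^{*}X = \^\cH_t^{*}[\overrightarrow{\^H}_t, X]$ to reduce the claim to $[\overrightarrow{\^H}_s, \vF{i}](\^\lambda(s))=0$ for $i\in\{0,1\}$ and all $s\in[\^\tau_1,t]$. Expanding,
\[
[\overrightarrow{\^H}_s, \vF{i}] = [\vF{0}, \vF{i}] + \^u(s)[\vF{1}, \vF{i}] + [\vchi, \vF{i}],
\]
the first two summands are Hamiltonian vector fields of $\pm F_{01}$. For the third, since $\chi$ and $\uD\chi$ both vanish on $\cS$, the function $\{\chi, F_i\}$ vanishes on $\cS$ and its differential at $\ell\in\cS$ reduces to the contraction $\uD^2\chi(\ell)(\,\cdot\,,\vF{i}(\ell))$; inserting the formula for $\uD^2\chi|_{\cS}$ and using the Poisson relations $\{F_1,F_{01}\}=F_{101}$ and $\{F_0,F_{01}\}=F_{001}=-v\,F_{101}$, one finds $\vect{\{\chi,F_1\}}|_{\cS}=-\vF{01}$ and $\vect{\{\chi,F_0\}}|_{\cS}=v\,\vF{01}$. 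Combined with $v(\^\lambda(s))=\^u(s)$, both brackets cancel along $\^\lambda$.

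For item~2, set $Y(t):=\cF^S_{t*}\dl_S$, $Z(t):=\^\cH_{t*}\dl_S$, and $W(t):=Y(t)-Z(t)$. Writing $\vFS-\overrightarrow{\^H}_t=(v-\^u(t))\vF{1}-\vchi$ and linearising at $\^\lambda(t)\in\cS$ in the direction $Y(t)\in T_{\^\lambda(t)}\cS$ (the inclusion $Y(t)\in T\cS$ follows from $\cS$-invariance of $\vFS$, Lemma~\ref{le:geo}(3)), the piece multiplied by $(v-\^u(t))$ vanishes because $v(\^\lambda(t))=\^u(t)$, and the Jacobian of $\vchi$ annihilates $T\cS$ because $\vchi\equiv 0$ on $\cS$. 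Hence
\[
\dot W(t) = \scal{\uD v(\^\lambda(t))}{Y(t)}\,\vF{1}(\^\lambda(t)) + \overrightarrow{\^H}_{t*}(\^\lambda(t))\,W(t),\qquad W(\^\tau_1)=0.
\]
Since the resolvent of the homogeneous part is $\^\cH_{t*}(\^\cH_s^{-1})_{*}$, variation of constants together with item~1 (which gives $\^\cH_{t*}(\^\cH_s^{-1})_{*}\vF{1}(\^\lambda(s))=\vF{1}(\^\lambda(t))$ and so pulls this vector outside the integral) delivers the claimed formula $W(t)=a(t,\dl_S)\,\vF{1}(\^\lambda(t))$.

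The main technical obstacle is the pair of bracket identities $\vect{\{\chi, F_i\}}|_{\cS}$ in item~1: they depend crucially on the vanishing of $\uD\chi$ on $\cS$ (so that only the Hessian contributes) and on the very precise form of $\uD^2\chi|_{\cS}$ prescribed by Lemma~\ref{le:geo}(4a). The same vanishing of the Jacobian of $\vchi$ on $T\cS$ is what makes the $\vchi$ contribution disappear from the linearised ODE in item~2, so both parts of the lemma rest on this single structural feature of $\chi$.
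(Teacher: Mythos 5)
Your proposal is correct, and it diverges from the paper's proof in both halves, so a comparison is in order. For Claim 1 the paper gives no argument at all: it simply cites Lemma 4 of [PS11]. You instead prove it directly via $\frac{\ud}{\ud t}\^\cH_t^{*}X=\^\cH_t^{*}[\overrightarrow{\^H}_t,X]$ and the bracket identities $\vect{\{\chi,F_1\}}\vert_{\cS}=-\vF{01}$, $\vect{\{\chi,F_0\}}\vert_{\cS}=v\,\vF{01}$, which is a legitimate and self-contained route; the one point to handle with care is the sign convention in the Poisson bracket, and you should note explicitly that the cancellation is only needed (and only holds) at the points $\^\lambda(s)$, where $v=\^u(s)$ -- which is all the reduction requires. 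For Claim 2 the paper works with the composed flow $\cG_t:=\^\cH_t^{-1}\*\cF^S_t$, observes that it is Hamiltonian with Hamiltonian $G_t=\bigl((v-\^u(t))F_1-\chi\bigr)\*\^\cH_t$, that $\uD G_t(\lrp)=0$, and hence that $\cG_{t*}$ is the linear Hamiltonian flow of the quadratic form $\uD^2G_t(\lrp)$; feeding it a vector whose image under $\^\cH_{t*}$ lies in $T\cS$ kills all terms except the cross term $\uD v\otimes\uD F_1$, and the result integrates at the fixed base point $\lrp$. You instead linearise the two flows separately along the moving point $\^\lambda(t)$, obtain the inhomogeneous variational equation for $W=Y-Z$, and conclude by variation of constants, pulling $\vF{1}$ out of the integral via Claim 1. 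The two arguments rest on exactly the same cancellations ($v-\^u=0$ along the reference, $\uD\chi$ and the Jacobian of $\vchi$ annihilating $T\cS$), but the paper's version buys the symplectic bookkeeping for free (everything happens at $\lrp$ and the generator is manifestly a quadratic Hamiltonian), whereas yours is more elementary and avoids invoking the composition formula for Hamiltonian flows; the small price is that you must justify that $\^\lambda$ is an integral curve of $\overrightarrow{\^H}_t$ (immediate from $\vchi\vert_{\cS}=0$) so that the resolvent of the homogeneous part is indeed $\^\cH_{t*}\^\cH_{s*}^{-1}$. Incidentally, your formula $W(t)=a(t,\dl_S)\vF{1}(\^\lambda(t))$ with $\cF^S_{s*}\dl_S$ under the integral sign is the correct reading of the statement; the $\cF^S_{t*}$ appearing in the paper's definition of $a(t,\dl_S)$ is evidently a typo.
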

\begin{proof}
  Claim 1 is proven in Lemma 4 of \cite{PS11}. \\
  {\em Proof of Claim 2: } The flow $\cG_t :=
  \^\cH_t^{-1}\circ\cF_t^S$ is the Hamiltonian flow associated to $G_t
  := \left( (v - \^u(t)) F_1 - \chi\right) \*\^\cH_t $. Since $\uD
  G_t(\lrp) = 0$, then $\cG_{t*} = \^\cH_{t*}^{-1} \cF^S_{t*}$ is the
  linear flow associated to the quadratic Hamiltonian
  \[
  \uD^2 G_t(\lrp) = \left( \uD v \otimes \uD F_1 + \uD F_1 \otimes \uD
    v - \dfrac{1}{F_{101}} \uD F_{01} \otimes \uD F_{01}
  \right)(\^\lambda(t)) \^\cH_{t*} \otimes \^\cH_{t*} .
  \]
  Set $\gamma(t) := \cG_{t*}\dl$. Since $\^\cH_{t*}\cG_{t*}\dl =
  \cF^S_{t*}\dl \in T_{\^\lambda(t)}\cS$, we obtain, by Claim 1 that
  $\dot\gamma(t) = \scal{\uD
    v(\^\lambda(t))}{\^\cF_{t*}\dl}\vF{1}(\lrp)$. Thus $\cG_{t*}\dl =
  \gamma(t) = \dl + \int_0^t\scal{\uD
    v(\^\lambda(s))}{\cF^S_{t*}\dl}\ud s \, \vF{1}(\lrp)$ which,
  together with Claim 1, completes the proof.
\end{proof}

We end this section by rephrasing Lemma \ref{le:coerc} in terms of the
flow $\^\cH$ defined in Lemma \ref{le:trasporti}.  This is done
adapting the proof of Claim 1 in Lemma 9 of \cite{PS11}.
\begin{corollary}\label{cor:proie}
  Under Assumptions \ref{ass:uniq} and \ref{ass:coerc} the kernel of
  the linear map $\pi_* \^\cH_{\^\tau_2 \, *} \colon L \to \tansp$ is
  trivial.
\end{corollary}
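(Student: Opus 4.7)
\medskip

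\noindent\textbf{Proof plan.} The aim is to transfer the triviality of $\ker\pi_*\cH\se_{\^\tau_2\,*}|_L$, established in Lemma~\ref{le:coerc}, to the corresponding statement for the linearization $\^\cH_{t\,*}$ of the nonlinear flow of $\vect{\^H}_t = \vect{\^F}_t + \vchi$. The natural bridge is Lemma~\ref{le:geo}(4): along $\cS$ one has $\chi = 0$, $\vchi = 0$, and $\uD^2\chi = \frac{1}{F_{101}}\,\uD F_{01}\otimes \uD F_{01}$, which, up to sign and the normalization $R(t) = F_{101}(\^\lambda(t))$, is precisely the Hessian defining $H\se_t$ in \eqref{eq:Hse}. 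Hence the linearization of $\vchi$ at $\^\lambda(t)$ behaves, on the relevant tangent vectors, as the Hamiltonian vector field of $H\se_t$, while Claim~1 of Lemma~\ref{le:trasporti} transports $\vF{0}(\lrp)$ and $\vF{1}(\lrp)$ along $\^\cH$ to $\vF{0}(\^\lambda(t))$ and $\vF{1}(\^\lambda(t))$, respectively.

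\medskip

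\noindent Given $\dl \in L$ with $\pi_*\^\cH_{\^\tau_2\,*}\dl = 0$, I would decompose $\dl = \alpha\vF{0}(\lrp) + \beta\vF{1}(\lrp) + (\omega,0)$ using the second description of $L$ in \eqref{eq:lse}, with $\omega \in \{f_0(\xJ),f_1(\xJ)\}^\perp$. The two descriptions of $L$ in \eqref{eq:lse} are consistent because \eqref{eq:H01} gives $F_{01}\*\lrp = 0$, so that in the special coordinates chosen around $\xJ$ one has $\vF{i}(\lrp) = (0,f_i(\xJ))$. By Claim~1 of Lemma~\ref{le:trasporti}, $\^\cH_{\^\tau_2\,*}\vF{i}(\lrp) = \vF{i}(\lrs)$, whose $\pi_*$-images are $f_0(\^x_2)$ and $f_1(\^x_2)$. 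The core step---following the pattern of Claim~1 of Lemma~9 in \cite{PS11}---is to compute $\pi_*\^\cH_{\^\tau_2\,*}(\omega,0)$: linearizing $\vect{\^H}_t$ at $\^\lambda(t)$ and exploiting the form of $\uD^2\chi$ on $\cS$ together with \eqref{eq:H1} and \eqref{eq:H01}, one establishes an explicit correspondence between $\pi_*\^\cH_{\^\tau_2\,*}(\omega,0)$ and $\pi_*\cH\se_{\^\tau_2\,*}(\omega,0)$ (after the obvious identification of tangent spaces via $\^S_{\^\tau_2\,*}$), modulo a correction lying in $\Span\{f_0(\^x_2),f_1(\^x_2)\}$.

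\medskip

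\noindent With this identification in place, the vanishing hypothesis rewrites as $\pi_*\cH\se_{\^\tau_2\,*}\dl' = 0$ for a suitable $\dl' \in L$ obtained from $\dl$ by reabsorbing the $\Span\{f_0,f_1\}$ correction into the $\vF{0}(\lrp),\vF{1}(\lrp)$ coefficients (i.e.~into the $\Span\{f_0(\xJ),f_1(\xJ)\}$ summand of the first description of $L$ in \eqref{eq:lse}). Lemma~\ref{le:coerc} then forces $\dl' = 0$, which yields $\omega = 0$ together with the linear relation $\alpha f_0(\^x_2) + \beta f_1(\^x_2) = 0$; since $f_0$ and $f_1$ are linearly independent at $\^x_2$ (a consequence of Assumption~\ref{ass:regbang} and the normality of the extremal), we get $\alpha = \beta = 0$ and hence $\dl = 0$. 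The main obstacle in this plan is the core computation of the middle paragraph: making the identification between $\pi_*\^\cH_{\^\tau_2\,*}(\omega,0)$ and $\pi_*\cH\se_{\^\tau_2\,*}(\omega,0)$ rigorous requires a careful linearization of $\vect{\^H}_t$ and precise bookkeeping of the $\Span\{f_0,f_1\}$ remainders; once this link is in place the rest is a direct application of Lemma~\ref{le:coerc}.
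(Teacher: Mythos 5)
Your overall strategy --- transferring the conclusion of Lemma \ref{le:coerc} to $\pi_* \^\cH_{\^\tau_2\,*}$ by comparing the linearisation of the flow of $\vect{\^H}_t=\vect{\^F}_t+\vchi$ with the flow $\cH\se_t$ of the accessory Hamiltonian, with $\uD^2\chi=\frac{1}{F_{101}}\uD F_{01}\otimes\uD F_{01}$ on $\cS$ as the bridge --- is the right one, and your preliminary observations are correct (the two descriptions of $L$ in \eqref{eq:lse} agree precisely because $F_{01}(\lrp)=0$, and Claim 1 of Lemma \ref{le:trasporti} transports $\vF{0},\vF{1}$). But the step you yourself flag as ``the main obstacle'' is not a verification to be filled in later: it is the entire content of the corollary. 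Without an actual identity relating $\pi_*\^\cH_{\^\tau_2\,*}$ to $\pi_*\cH\se_{\^\tau_2}$ there is nothing to which Lemma \ref{le:coerc} can be applied, so as written the argument is incomplete exactly where the work lies.

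The paper closes this gap with a device that also dispenses with your decomposition of $\dl$ and the bookkeeping of $\Span\{f_0,f_1\}$ remainders. Consider the composed linear flow $\cG_t:=\^\cF\inv_{t*}\^\cH_{t*}$. By the variation-of-constants formula for Hamiltonian flows, $\cG_t$ is generated by $\left(\^H_t-\^F_t\right)\*\^\cF_t=\chi\*\^\cF_t$; since $\chi$ vanishes together with its differential along $\^\lambda(t)=\^\cF_t(\lrp)\in\cS$, the linearised flow $\cG_{t*}$ is the flow of the purely quadratic Hamiltonian $\frac{1}{2}\uD^2\chi(\^\lambda(t))\left(\^\cF_{t*}\,\cdot\,,\^\cF_{t*}\,\cdot\,\right)$, which a computation in the adapted coordinates identifies with $-H\se_t\*i$, where $i\colon(\omega,\dx)\mapsto(-\omega,\dx)$. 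This gives the \emph{exact} identity $\^S\inv_{\^\tau_2 *}\,\pi_*\^\cH_{\^\tau_2 *}=\pi_*\cH\se_{\^\tau_2}\,i$ on all of $T_{\lrp}\cotbun$: no correction in $\Span\{f_0(\^x_2),f_1(\^x_2)\}$ ever appears. Since $iL=L$ and $\^S\inv_{\^\tau_2 *}$ is an isomorphism, the two kernels coincide and Lemma \ref{le:coerc} concludes. If you insist on your route, you would still have to carry out essentially this computation to control $\pi_*\^\cH_{\^\tau_2\,*}(\omega,0)$, and additionally check that your correction term, pulled back through $\pi_*\cH\se_{\^\tau_2}$, lands in the $\R\vF{0}(\lrp)\oplus\R\vF{1}(\lrp)$ summand of $L$ and vanishes when $\omega=0$ --- all of which the exact identity renders unnecessary.
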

\begin{proof}
  $\cG_t := \^\cF\inv_{t*}\^\cH_{t*}$ is the linear flow associated to
  the quadratic Hamiltonian
  \[
  \begin{split}
    G_t(\omega, \dx) & = \dfrac{1}{2R(t)}\left( \uD
      F_{01}(\^\lambda(t))\left( \omega \^S\inv_{t*} ,
        \^S_{t*}\dx\right)
    \right)^2 \\
    & = \dfrac{1}{2R(t)}\left( \scal{\omega\^S\inv_{t*}
      }{f_{01}(\^\xi(t))} + \scal{\^\mu(t)}{\uD f_{01}(\^\xi(t))
        \^S_{t*}\dx}
    \right)^2 \\
    & = \dfrac{1}{2R(t)}\left( \scal{\omega}{\dot g_{1, t}(\^ x_1)} -
      \dx \cdot \dot g_{1, t} \cdot \beta(\^x_1) \right)^2.
  \end{split}
  \]
  Consider the linear isomorphism $i \colon (\omega, \dx) \mapsto
  (-\omega, \dx)$. Then $ G_t = - H\se_t \* i $ and $ \vG_{t} = i \*
  \overrightarrow{H\se_t} \* i$ so that $ \pi_* \cH\se_t i =
  \pi_*\cG_t = \pi_* \^\cF\inv_{t*} \cH_{t*} = \^S\inv_{t*} \pi_*
  \^\cH_{t*}.  $ Since $iL = L$, from Lemma \ref{le:coerc} we finally
  get the claim.
\end{proof}

\subsection{Existence of an extremal}
\label{sec:fundlemma}
In the following lemma we prove the existence of a bang-singular-bang
extremal for $\Pbr$.
\begin{lemma}
  \label{le:fond}
  There exist $\raggio > 0 $, $\e > 0$ and a neighbourhood $\cO$ of
  $\mri$ in $\cotsp$ such that for any $r$, $\norm{r} < \raggio$ there
  exists a unique normal bang--singular--bang extremal par $\lambda^r
  := \left( \mu^r, \xi^r \right)$ of $\Pbr$ with the following
  properties
  \begin{packedenum}
  \item $\mu^r(0) \in \cO $;
  \item the first switching time $\tau_1(r)$ is in $[\^\tau_1 - \e,
    \^\tau_1 + \e ]$;
  \item the second switching time $\tau_2(r)$ is in $[\^\tau_2 - \e,
    \^\tau_2 + \e ]$;
  \item the final time $T(r) $ is in $[\^T - \e, \^T + \e ]$.
  \item the times $\tau_1(r)$, $\tau_2(r)$ and $T(r)$ and the initial
    adjoint covector $\omega(r) := \mu^r(0)$ depend smoothly on $r$.
  \end{packedenum}
  Moreover the bang arcs are regular
  \[
  u_1 F_1^r \*\lambda^r(t) > 0 \quad \forall t \in [0, \tau_1^r) ,
  \qquad u_2 F_1^r \*\lambda^r(t) > 0 \quad \forall t \in (\tau_2^r,
  \^T] ,
  \]
  and the singular arc is of the first kind
  \[
  F_{101}^r\*\lambda^r(t) > 0 \quad \forall t \in [\tau_1^r ,\tau_2^r
  ] .
  \]
\end{lemma}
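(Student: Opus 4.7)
The plan is to apply the implicit function theorem to a shooting-type system whose zeros are exactly the normal bang-singular-bang Pontryagin extremal pairs of $\Pbr$ with the prescribed structure. For $(\omega,\tau_1,\tau_2,T) \in \cotsp \times \R^3$ and $r$ near $0$, I construct a candidate $\lambda^r(\cdot)$ piecewise, by flowing the Hamiltonian vector field of $H_1^r$ from $(\omega,a^r)$ on $[0,\tau_1]$, then $\vFSr$ on $[\tau_1,\tau_2]$, and then the Hamiltonian vector field of $H_2^r$ on $[\tau_2,T]$. The defining equation $G(r,\omega,\tau_1,\tau_2,T)=0$ bundles together $n+3$ scalar constraints: (i) the two junction conditions $F_1^r(\lambda^r(\tau_1))=F_{01}^r(\lambda^r(\tau_1))=0$, so that the trajectory lands on $\cS^r$ and $F^{S,r}$ is the right governing Hamiltonian; (ii) the $n$ endpoint conditions $\pi(\lambda^r(T))=b^r$; and (iii) the normalisation $H^{\max,r}(\omega,a^r)=1$ encoding normality with unit cost. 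By Assumption \ref{ass:PMP} the nominal tuple $(0,\mri,\^\tau_1,\^\tau_2,\^T)$ is a zero of $G$, and $G$ is smooth in all its arguments.

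To apply the implicit function theorem I must check that $D_{(\omega,\tau_1,\tau_2,T)} G(0,\mri,\^\tau_1,\^\tau_2,\^T)$ is an isomorphism of $\R^{n+3}$. Take a variation $(\delta\omega,\delta\tau_1,\delta\tau_2,\delta T)$ in its kernel and propagate $(\delta\omega,0) \in T_{\lri}\cotbun$ through the three nominal segments; let $\delta_1$ denote the induced variation at time $\^\tau_1$, equal to the push-forward of $(\delta\omega,0)$ by the linearised bang flow plus $\vH{1}(\lrp)\delta\tau_1$. Condition (iii) keeps $(\delta\omega,0)$ tangent to $\{H_1=1\}$ at $\lri$; condition (i) forces $\delta_1 \in T_{\lrp}\cS$, and the non-degeneracy $\{H_1, F_{01}\}(\lrp)=-(F_{001}+u_1 F_{101})(\lrp) \neq 0$ supplied by Assumption \ref{ass:regjun} makes $\delta\tau_1$ effective in enforcing the $F_{01}^r$ junction equation. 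A parallel analysis at $\lrs$ uses $\delta\tau_2$ together with the decomposition $\vH{2}-\vFS=(u_2-v)\vF{1}$ at $\lrs$, while $\delta T$ is aligned with $\vH{2}(\lf)$ at the endpoint.

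Next I use Claim 2 of Lemma \ref{le:trasporti} to rewrite $\cF^S_{\^\tau_2\, *}\delta_1$ as $\^\cH_{\^\tau_2\, *}\delta_1$ plus a multiple of $\vF{1}(\lrs)$, and I add the $\delta\tau_2$-contribution (also a multiple of $\vF{1}(\lrs)$ by the decomposition above) and the $\delta T$-contribution $\vH{2}(\lf)\delta T$. After projection by $\pi_*$ the endpoint condition (ii) reduces to an equation of the form $\pi_*\^\cH_{\^\tau_2\, *}\delta_L = 0$, where, once (iii) and the membership $\delta_1 \in T_{\lrp}\cS$ have been accounted for, $\delta_L$ lies in the Lagrangian subspace $L$ of \eqref{eq:lse}. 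Corollary \ref{cor:proie} then yields $\delta_L = 0$, and unwinding the absorptions step by step gives $\delta T = \delta\tau_2 = \delta\tau_1 = 0$ and finally $\delta\omega = 0$. Invertibility is established; the implicit function theorem supplies smooth branches $(\omega(r), \tau_1(r), \tau_2(r), T(r))$ on $\{\|r\| < \raggio\}$ with the correct nominal values, and the associated $\lambda^r = (\mu^r, \xi^r)$ is by construction a normal Pontryagin extremal pair of $\Pbr$.

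The regularity statements at the end of the lemma --- strict positivity of $u_i F_1^r \circ \lambda^r$ on the bang arcs and of $F_{101}^r \circ \lambda^r$ on the singular arc --- follow by continuity from Assumption \ref{ass:regbang} and \eqref{eq:SGLC}, the latter implied by Assumption \ref{ass:coerc} via Remark \ref{re:jsec}: strict inequalities on the compact arcs of the nominal extremal persist under the small $C^0$-perturbation of $\lambda^r$ and of the times $\tau_1(r),\tau_2(r),T(r)$ delivered by the implicit function theorem. The main obstacle in this plan is precisely the invertibility of the Jacobian: one must carefully track how the three scalar freedoms $\delta\tau_1,\delta\tau_2,\delta T$ absorb the Hamiltonian-vector-field directions at the junctions $\lrp,\lrs$ and at the endpoint $\lf$, in order to isolate the element $\delta_L \in L$ on which Corollary \ref{cor:proie} can act.
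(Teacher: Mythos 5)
Your shooting system and kernel analysis coincide with the paper's own proof: the map $\Psi$ in \eqref{eq:impl} imposes exactly your conditions (i)--(iii) (with the normalisation written as $\Fr{0}\*\exp\tau_1\vHr{1}(\omega,a^r)=1$ at the first junction, which is equivalent to your $H^{\max,r}(\omega,a^r)=1$ because $H_1^r$ is conserved along its own flow and $\Fr{1}$ vanishes at the junction), and the invertibility of the Jacobian is obtained precisely as you propose: $\dtau_1$ is determined by the $\Fr{01}$-equation so that the corrected variation $\dl_S$ lies in $T_{\lrp}\cS$, Claim 2 of Lemma \ref{le:trasporti} replaces $\cFS_{\^\tau_2\,*}$ by $\^\cH_{\^\tau_2\,*}$ modulo $\vF{1}$, the $\dtau_2$ and $\dT$ contributions are collected along $\vF{1}(\lrp)$ and $\left(\vF{0}+u_2\vF{1}\right)(\lrp)$, and Corollary \ref{cor:proie} applied on $L$ kills everything. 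So the core of your argument is the paper's argument.

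The one step that does not work as written is the regularity of the bang arcs. You claim that $u_1 F_1^r\*\lambda^r>0$ on $[0,\tau_1(r))$ \emph{persists under small $C^0$-perturbation} from Assumption \ref{ass:regbang}; but $u_1F_1\*\^\lambda(t)\to 0$ as $t\to\^\tau_1^-$, so on the part of the bang arc near the perturbed switching time there is no uniform positive lower bound to perturb, and $\tau_1(r)$ may moreover fall on either side of $\^\tau_1$, so the sign of $u_1\Fr{1}\*\lambda^r$ on the interval between $\^\tau_1-\e$ and $\tau_1(r)$ is genuinely not a consequence of continuity alone. The paper closes this by splitting off the interval $[\^\tau_1-\e,\tau_1(r))$ and expanding $t\mapsto u_1\Fr{1}\*\lambda^r(t)$ to second order at $t=\tau_1(r)$, where both $\Fr{1}$ and $\Fr{01}$ vanish along $\lambda^r$, obtaining
\begin{equation*}
  u_1\Fr{1}\*\lambda^r(t)=\dfrac{\left(t-\tau_1(r)\right)^2}{2}\left(u_1\Fr{001}+\Fr{101}\right)\*\lambda^r(\theta),
\end{equation*}
whose positivity follows from the persistence of the junction-regularity inequality of Assumption \ref{ass:regjun} (not of Assumption \ref{ass:regbang}) in a neighbourhood of $\lrp$; the second bang arc is handled symmetrically. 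With this correction your proof matches the paper's.
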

\begin{proof}
  The proof of the lemma is a straightforward application of the
  implicit function theorem.  Let $B(0,\raggio)$ be the ball of radius
  $\raggio > 0$ centred at the origin in $\R^m$. If $\raggio $ and the
  neighbourhood $\cO$ are sufficiently small, we can define the
  following map
  \begin{multline}
    \label{eq:Phi}
    \qquad \Phi \colon (r, \omega, \tau_1, \tau_2, T) \in B(0, \raggio)
    \times \cotsp
    \times \R^3 \mapsto \\
    \pi\exp (T - \tau_2) \vHr{2} \* \exp(\tau_2 - \tau_1)\vFSr \*\exp
    \tau_1\vHr{1}(\omega, a^r) - b^r \in \spa . \qquad
  \end{multline}
  Let
  \begin{multline}\label{eq:impl}
    \qquad \Psi(r, \omega, \tau_1, \tau_2, T) = \left( \Phi (r,
      \omega, \tau_1, \tau_2, T) ,
      \Fr{1} \* \exp \tau_1\vHr{1}(\omega, a^r) , \right. \\
    \left. \Fr{01} \* \exp \tau_1\vHr{1}(\omega, a^r) , \Fr{0} \* \exp
      \tau_1\vHr{1}(\omega, a^r) - 1 \right), \qquad
  \end{multline}
  we prove that the Jacobian matrix $\left. \dfrac{\partial \Psi
    }{\partial(\omega, \tau_1, \tau_2, T)}\right\vert_{(0, \lri,
    \^\tau_1, \^\tau_2, \^T)}$ is non-degenerate, so that the implicit
  equation $\Psi
  \begin{pmatrix}
    r, \omega, \tau_1, \tau_2, T
  \end{pmatrix} = 0 $ defines smooth functions
  \[
  \omega(r), \ \tau_1(r), \  \tau_2(r),   \    T(r), \ \norm{r} < \raggio
  \]
  for some positive $\raggio$. Indeed, the matrix is equal to
  \begingroup
  \everymath{\scriptstyle}
  \begin{equation*}
    \begin{pmatrix}
      \exp(\^T - \^\tau_2)h_{2 *}\pi_*\cFS_{\^\tau_2 *}\!
      \exp\^\tau_1\vH{1 }_*(\cdot,
      0) 
      \!\!\!\!& \left. c_1 \! \exp(\^T - \^\tau_2)h_{2*} \pi_*
        \cFS_{\^\tau_2 \! *}
        \vF{1}\right\vert_{\lrp} 
      \!\!\!\!& \left. - c_2 \! \exp(\^T - \^\tau_2)h_{2*}
        f_1\right\vert_{\^x_2 }
      \!\!\!\!& h_2(\^x_f)\\[5mm]
      \dueforma{\exp\^\tau_1\vH{1}_*(\cdot,
        0)}{\left. \vF{1}\right\vert_{\lrp}}
      & 0 & 0 & 0\\[5mm]
      \dueforma{\exp\^\tau_1\vH{1}_*(\cdot,
        0)}{\left. \vF{01}\right\vert_{\lrp}} & c_1 \left. F_{101} \right\vert_{\lrp}& 0 & 0 \\[5mm]
      \dueforma{\exp\^\tau_1\vH{1}_*(\cdot,
        0)}{\left. \vF{0}\right\vert_{\lrp}} & 0 & 0 & 0
    \end{pmatrix}
  \end{equation*}
  \endgroup
  where $c_1 := u_1 + \frac{F_{001}}{F_{101}}(\lrp)$ and $c_2 := u_2 +
  \frac{F_{001}}{F_{101}}(\lrs)$ are nonzero (see Remark
  \ref{re:jsec}).

  Since $\exp\^\tau_1\vH{1}_*$ is a linear isomorphism between
  vertical fibers, this matrix 
  is singular if and only if there exist $\dl := (\omega, 0)$,
  $\dtau_1$, $\dtau_2$ and $\dT$, with at least one of them different
  from zero, such that
  \begin{align}
    \label{eq:impl2a} & \pi_*\cFS_{\^\tau_2 \, *} \left( \dl + \dtau_1
      \, c_1 \, \vF{1}(\lrp) \right) - \dtau_2 \, c_2 \, f_1(\^x_2) +
    \dT \, h_2(\^x_2) = 0
    \\
    & \dueforma{\dl}{\vF{1}(\lrp)} = 0 \label{eq:impl2b}\\
    & \dueforma{\dl}{\vF{01}(\lrp)} + \dtau_1 \, c_1 \, F_{101}( \lrp ) = 0 \label{eq:impl2c}\\
    & \dueforma{\dl}{\vF{0}(\lrp)} = 0 \label{eq:impl2d}
  \end{align}
  Equation \eqref{eq:impl2c} yields $\dtau_1 = \frac{ -
    \dueforma{\dl}{\vF{01}(\lrp)}}{c_1 \, F_{101}( \lrp )} $, hence
  \[
  \dl_{\cS} := \dl + \dtau_1 \, c_1 \, \vF{1}(\lrp) = \dl -
  \frac{\dueforma{\dl}{\vF{01}(\lrp)}}{F_{101}( \lrp )} \vF{1}(\lrp)
  \in T_{\lrp}\cS,
  \]
  so that by Claims 1. and 3. in Lemma \ref{le:trasporti}
  \[
  \cFS_{\^\tau_2 \, *} \left( \dl + \dtau_1 \: c_1 \, \vF{1}(\lrp)
  \right) = %
  \^\cH_{\^\tau_2 \, *} \left( \dl -
    \frac{\dueforma{\dl}{\vF{01}(\lrp)}}{F_{101}( \lrp )} \vF{1}(\lrp)
    +a(\^\tau_2, \dl_S) \vF{1}(\lrp) \right)
  \]
  and equation \eqref{eq:impl2a} reads
  \begin{equation}
    \label{eq:impl3a}
    \begin{split}
      \pi_*\cH_{\^\tau_2\, *}\Big( \dl - &
      \frac{\dueforma{\dl}{\vF{01}(\lrp)}}{F_{101}( \lrp )}
      \vF{1}(\lrp) +a(\^\tau_2, \dl_S) \vF{1}(\lrp)
      - \\
      & - \, \dtau_2 \, c_2 \, \vF{1}(\lrp) + \delta T \, \left(\vF{0}
        + u_2 \vF{1}\right) (\lrp) \Big) = 0
    \end{split}
  \end{equation} 
  Equations \eqref{eq:impl2b} and \eqref{eq:impl2d} yield $\dl \in
  \Span\{ f_0(\xJ), f_1(\xJ)\}^\perp \times \{ 0 \} \subset L$. Thus
  Corollary \ref{cor:proie} and equation \eqref{eq:impl3a} yield
  \begin{equation}
    \label{eq:impl3}
    \dl - \left( 
      \frac{\dueforma{\dl}{\vF{01}(\lrp)}}{F_{101}( \lrp )} 
      - a(\^\tau_2, \dl_S)
      + \, \dtau_2 \, c_2 
      - u_2 \, \delta T 
    \right) 
    \vF{1}(\lrp)
    + \delta T \, \vF{0}(\lrp) = 0 .
  \end{equation}
  Since $\dl$, $\vF{0}(\lrp)$ and $\vF{1}(\lrp)$ are linearly
  independent, equation \eqref{eq:impl3} gives
  \[
  \dT = 0, \qquad \dl = \dl_S = 0, \qquad \dtau_2 = 0.
  \]
  Finally, substituting in \eqref{eq:impl2c}, we get $\dtau_1 = 0$
  which proves our claim, i.e.~
  \begin{equation}
    \label{eq:lambdar}
    \lambda^r \colon t \mapsto
    \begin{cases}
      \exp t\vHr{1}(\omega,a^r) \quad & t \in [0, \tau_1(r)] \\
      \exp(t - \tau_1(r))\vFSr \*\lambda^r(\tau_1(r)) \quad
      & t \in (\tau_1(r), \tau_2(r)] \\
      \exp (T(r) - \tau_1(r))\vHr{2}\*\lambda^r(\tau_2(r)) \quad & t
      \in (\tau_2(r), T(r)]
    \end{cases}
  \end{equation}
  is a normal extremal for problem $\Pbr$.

  \noindent By continuity, possibly restricting $\raggio > 0$ and $\cO$, we
  can assume, for any $r \in B(0, \rho)$, 
  \[
  \begin{split}
    & F_{101}^r\*\lambda^r(t)> 0 \quad \forall t \in [\^\tau_1 (r),
    \^\tau_2 (r) ], \\
    & u_1F_1^r\*\lambda^r(t)> 0 \quad \forall t \in [0, \^\tau_1 - \e] ,\\
    & \left( u_1 \Fr{001} + \Fr{101} \right)\*\lambda^r(t)> 0 \quad
    \forall t \in [\^\tau_1 - \e, \^\tau_1 + \e].
  \end{split}
  \]
  The Taylor expansion centered in $\tau_1(r)$ of the map $t \mapsto
  u_1F_1^r\*\lambda^r(t)$ proves that
  \begin{equation*}
    u_1F_1^r \*\lambda^r(t) = \dfrac{(t - \tau_1(r))^2}{2}\left(
      u_1 \Fr{001} + \Fr{101}
    \right)\*\lambda^r(\theta)
  \end{equation*}
  so that $u_1F_1^r \*\lambda^r(t) > 0$ for any $t \in [\^\tau_1 - \e,
  \tau_1(r))$.

  \noindent Analougous proof holds for the second bang arc.
\end{proof}

\subsection{Proof of Theorem \ref{thm:main}}

In order to prove Theorem \ref{thm:main}, we need to prove the strong
local optimality of the extremal pair defined in Lemma \ref{le:fond}.
We first prove that the extended second variation along the singular
arc of $\lambda^r$ is coercive, for sufficiently small $\norm{r}$.

Let $\tau_1(r)$ and $\tau_2(r)$ be the switching times of $\lambda^r$
as defined in Lemma \ref{le:fond} and let $v^r$ be the associated
singular control, i.e.~
\begin{equation*}
  v^r(t) := \dfrac{- \, \Fr{001}}{\Fr{101}}(\lambda^r(t)) \quad 
  t\in [\^\tau_1 - \e, \^\tau_2 + \e] .
\end{equation*}
Following the same lines as in the nominal problem $\Pbz$, let ${
  S^r}_{t}$ be the flow -- starting at the time $\tau_1(r)$ -- of the
vector field $f^r_{0} + v^r(t) f^r_1 $ and define $\gir{0}{t}$ and
$\gir{1}{t}$ as the dragged vector fields at time $\tau_1(r)$ along
such flow of the vector fields $f^r_{0}$ and $f^r_1$, respectively:
\begin{equation*} 
  \gir{i}{t}( x) := \left( { S^r}_{t*} \right)\inv f^r_i \* {
    S^r}_t( x)\, , \quad i=0, 1.
\end{equation*}
Let $x_{i}(r):= \xi^r(\tau_{i}(r))$, $i=1, 2$. Define coordinates
$y^r$ in a neighborhood of $x_1^r$ such that
\begin{equation*}
  y^r = y +O(r), \quad f_1^r \equiv \spd{}{y^r}, \ \text{ and } \ 
  f_0^r = \spd{}{y_2^r} - y_1^r \left( f^r_{01}(x_{i}(r)) + O(y^r)\right).
\end{equation*}
In such coordinates choosing $\beta^r(y^r) = - \dsum_{i=2}^n \mu_i^r
y_i^r $, where $\mu^r(\tau_1(r)) = \left( 0, \mu_2^r, \ldots, \mu_n^r
\right)$ the extended second variation along the singular arc of
$\lambda^r$ is the quadratic form
\begin{equation*} 
  {J^r}_\ext (\e_0, \e_1 , w) =
  \dfrac{1}{2}\int_{\tau_1(r)}^{\tau_2(r)} \left( w^2(t)
    F_{101}^r\*\lambda^r(t) + 2 w(t)\, \zeta(t) \cdot \dgir{1}{t}
    \cdot \beta^r((x_1(r)) \right) \ud t
\end{equation*}
on the linear sub--space $\cW^r$ of $\R^2 \times L^2([\tau_1(r),
\tau_2(r)], \R)$ of the triplets $(\e_0, \e_1, w)$ such that the
linear system
\begin{equation}\label{eq:sistjsefinalr}
  \begin{split}
    & \dot\zeta^r(t) = w(t) \dgir{1}{t}(x_1(r)), \\
    & \zeta^r(\tau_1(r)) =\e_0 f^r_0 (x_1(r)) + \e_1 \, f^r_1
    (x_1(r)), \quad \zeta^r(\tau_2(r)) = 0.
  \end{split}
\end{equation}
admits a solution $\zeta^r$.
\begin{lemma}
  \label{le:varsecr}
  Let $\lambda^r$ be the extremal pair of problem $\Pbr$ defined in
  Lemma \ref{le:fond}. There exists $\raggio > 0$ such that for any $r$,
  $\norm{r} < \raggio$, the extended second variation along the singular arc
  of $\lambda^r$ is coercive.
\end{lemma}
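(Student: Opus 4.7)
The plan is to pull back the problem to a Hilbert space that does not depend on $r$ and then invoke a standard weak compactness and lower semicontinuity argument exploiting the coercivity at $r=0$ given by Assumption \ref{ass:coerc}. First I would introduce the affine time reparametrization $s = \^\tau_1 + (t - \tau_1(r))(\^\tau_2 - \^\tau_1)/(\tau_2(r) - \tau_1(r))$ that maps $[\tau_1(r), \tau_2(r)]$ onto the fixed interval $[\^\tau_1, \^\tau_2]$, combined with the smooth $r$-dependent change of state coordinates already used in the paper, so that $x_1(r)$ is sent to $\xJ$ and the coordinates $y^r$ coincide with $y + O(r)$. In the resulting fixed Hilbert space $\R^2 \times L^2([\^\tau_1, \^\tau_2], \R)$, equipped with the natural norm $\|(\e_0, \e_1, w)\|^2 = \e_0^2 + \e_1^2 + \|w\|_{L^2}^2$, the functional $J^r_\ext$ becomes a quadratic form $\~J^r_\ext$, while the solvability requirement \eqref{eq:sistjsefinalr} together with $\zeta^r(\tau_2(r))=0$ defines a closed subspace $\~\cW^r$. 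By Lemma \ref{le:fond} the data $\tau_i(r)$, $\omega(r)$, $v^r$ and $\lambda^r$ depend smoothly on $r$, so the coefficients of $\~J^r_\ext$ (notably $F^{r}_{101}\*\lambda^r$ pulled back through the reparametrization) and the linear map defining $\~\cW^r$ depend continuously on $r$ in the $C^0$ and operator-norm topologies, with the $r=0$ specialisation recovering $J\se_\ext$ and $\cW$.

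Next I would argue by contradiction and compactness. If the claim failed there would exist $r_n \to 0$ and unit vectors $(\e_0^n, \e_1^n, w_n) \in \~\cW^{r_n}$ with $\~J^{r_n}_\ext(\e_0^n, \e_1^n, w_n) \to 0$. Up to a subsequence $(\e_0^n, \e_1^n) \to (\e_0^*, \e_1^*)$ in $\R^2$ and $w_n \rightharpoonup w^*$ weakly in $L^2$. Because the map $w \mapsto \zeta$ is ODE integration with uniformly bounded data, the trajectories $\zeta_n$ are equicontinuous and converge uniformly on $[\^\tau_1, \^\tau_2]$ to the nominal trajectory $\zeta^*$ associated to $(\e_0^*, \e_1^*, w^*)$; passing to the limit in the endpoint condition yields $(\e_0^*, \e_1^*, w^*) \in \cW$. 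The cross term of $\~J^r_\ext$ passes to the limit by the strong--weak $L^2$ pairing, while the diagonal term is weakly lower semicontinuous since $F^{r_n}_{101}\*\lambda^{r_n}$ converges uniformly to $F_{101}\*\^\lambda$, which is positive by \eqref{eq:SGLC}. Hence $J\se_\ext(\e_0^*, \e_1^*, w^*) \leq 0$, and Assumption \ref{ass:coerc} forces $(\e_0^*, \e_1^*, w^*) = 0$. Then $(\e_0^n, \e_1^n) \to 0$ strongly, the cross term tends to zero (since $\|w_n\|_{L^2} \leq 1$ and $\zeta_n \to 0$ uniformly), and the uniform positivity of $F^{r_n}_{101}\*\lambda^{r_n}$ together with $\~J^{r_n}_\ext \to 0$ forces $\|w_n\|_{L^2} \to 0$, contradicting the unit-norm normalisation.

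The main technical obstacle is the simultaneous $r$-dependence of the integration interval, the base point $x_1(r)$, the boundary data and the finite-codimensional constraint subspace $\cW^r$: absent a common ambient space, there is no direct way to compare $\~\cW^r$ and $J^r_\ext$ across different $r$, and in particular no way to realise $J\se_\ext$ as a limit of $J^r_\ext$. Once the pull-back has been carried out, the remaining analytic ingredients are standard: strong $C^0$ convergence of ODE trajectories under weak $L^2$ convergence of inputs (via Arzel\`a--Ascoli applied to $\zeta_n$), weak lower semicontinuity of $\int a(s) w^2\, ds$ for uniformly positive continuous $a$, and the fact that coercivity of a self-adjoint quadratic form on a closed subspace is stable under small perturbations of both the form and the subspace.
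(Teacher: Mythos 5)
Your proof is correct and follows essentially the same route as the paper's: a contradiction argument along a sequence $r_n \to 0$, weak compactness of a normalised minimising sequence in a fixed Hilbert space, strong--weak convergence of the constraint and of the cross term, weak lower semicontinuity of the diagonal term, and reduction to the nominal coercivity of Assumption \ref{ass:coerc} to force the weak limit to vanish and hence contradict the unit normalisation. The only difference is the device used to obtain a fixed ambient space: you rescale $[\tau_1(r), \tau_2(r)]$ affinely onto $[\^\tau_1, \^\tau_2]$, whereas the paper extends the controls by zero to the larger fixed interval $[\^\tau_1 - \e, \^\tau_2 + \e]$ and must then verify separately that the weak limit $w_0$ vanishes a.e.\ outside $[\^\tau_1, \^\tau_2]$ --- a step your reparametrisation renders unnecessary; both devices are sound.
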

\begin{proof}
  Assume, by contradiction, there exists a sequence $r_{n} \to 0$ such
  that ${J^{r_{n}}_\ext}$ is not coercive on $\cW^{r_{n}}$.  Define
  $\tau_i^n := \tau_i(r_{n})$, $x_i^n := x_i^{r_n}(\tau_i(r_{n}))$,
  $i=1, 2$ and let $\e >0$ such that for any $n \in \N$, $[\tau_1^n ,
  \tau_2^n ] \subset I := [\^\tau_1 -\e, \^\tau_2 + \e]$. We extend
  any $w \in L^{2}([\tau_1^n , \tau_2^n ])$ to the interval $I$ by
  prolonging it as zero and we define $H := \R^2 \times L^2(I, \R)$.
  Then  there exists $\chi^{n} = \left( \e^{n}_{0}, \e^{n}_1,
    w^{n} \right)\in H$, $\norm{\chi^{n}} = 1$ such that
  \[
  \e_0^n f_0^{r_n}(x_1^n) + \e_1^n f_1^{r_n}(x_1^n) +
  \int_{\tau_1^n}^{\tau_2^n} w^n(t)\dgin{1}{t}(x_1^n)\ud t = 0, \qquad
  J^{r_{n}}_\ext (\chi^n) \leq 0.
  \]
  Without any loss of generality we can assume $\chi^n \rightharpoonup
  \chi^0 = \left( \e_0, \e_1, w_0 \right) \in H$, $\norm{\chi^0} \leq
  1$.  Let $\zeta^{n}$ be the associated solution of system
  \eqref{eq:sistjsefinalr}, for $r=r_{n}$.   By standard arguments
  \begin{equation*}
    \begin{split}
      \lim_{n\to\infty} \e_0^n f_0^{r_n}(x_1^n) + \e_1^n
      f_1^{r_n}(x_1^n)
      & + \int_{\tau_1^n}^{\tau_2^n} w^n(t)\dgin{1}{t}(x_1^n)\ud t = \\
      & =\e_0 f_0(\^x_1) + \e_1 f_1 (\^x_1) +
      \int_{\^\tau_1}^{\^\tau_2} w(t)\dgi{1}{t}(\^x_1)\ud t
    \end{split}
  \end{equation*}
  and
  \begin{equation}
    \label{eq:Jr1}
    \lim_{n\to\infty} \int_{\tau_1^n}^{\tau_2^n} \!\! \! \! 
    w^n(t) \, 
    \zeta^n(t) \cdot \dgin{1}{t}(x_1^n)
    \cdot \beta^r(x_1^n)
    \ud t = 
    \int_{\^\tau_1}^{\^\tau_2} \! \! w(t) \, 
    \zeta(t) \cdot \dgi{1}{t}(\^x_1)
    \cdot \beta(\^x_1) \ud t .
  \end{equation}
  Also
  \begin{equation}\label{eq:Jr2}
    \int_{I} w_{n}^{2}(t)R^{r_{n}}(t) \ud t = \int_{I}
    w_{n}^{2}(t)R(t) \ud t + \int_{I} w_{n}^{2}(t)\left( R^{r_{n}} -
      R \right)(t) \ud t .
  \end{equation}
  The second addendum converges to zero since $\norm{w_{n}}_2$ is
  uniformly bounded and $R^{r_{n}}$ converges to $R$ in the
  $L^{\infty}(I)$ norm. Let us turn to the first addendum:
  \begin{equation}\label{eq:Jr3}
    \begin{split}
      & \int_{I} w_{n}^{2}(t)R(t) \ud t = \int_{I} w_{0}^{2}(t)R(t)
      \ud t + \int_{I}
      \left( w_{n} - w_{0} \right)^{2}(t)R(t) \ud t + \\
      & + 2 \int_{I} R(t)w_{0}(t) \left( w_{n} - w_{0} \right) \ud t .
    \end{split}
  \end{equation}
  Letting $n \to \infty$ and summing up the results in
  \eqref{eq:Jr1}--\eqref{eq:Jr3} we obtain
  \begin{equation}
    \label{eq:Jr4}
    \liminf_{n \to \infty} J^{r_{n}}_{\ext} (\chi_{n})
    \geq C \norm{\chi_{0}}^{2} + \liminf_{n \to \infty}\int_{I}R(t)(w_{n} - w_{0})^{2}(t)
    \ud t
  \end{equation}
  If $\chi_{0} = 0$ then
  $\norm{w_{n}} \geq \frac{1}{2}$ for large enough $n$'s so that, by
  \eqref{eq:Jr4},
  \begin{equation*}
    \liminf_{n \to \infty} J^{r_{n}}_{\ext}(\chi_{n})
    \geq \frac{1}{2} \abs{I} \inf_{I}R(t) > 0.
  \end{equation*}
  By \cite{Hes66} this proves the coercivity of $J^{r_{n}}_{\ext}$. \\
If $\chi_0 \neq 0$, then equation \eqref{eq:Jr4} yields the claim, provided 
 $w_{0} (t)= 0$ \qo $t \in [\^\tau_1 -
  \e, \^\tau_1] \cup [\^\tau_2, \^\tau_2 + \e]$. Since $w_{n}
  \rightharpoonup w_{0}$ in $L^{2}([\^\tau_1 - \e, \^\tau_2 + \e])$,
  then $w_{n} \rightharpoonup w_{0}$ in $L^{2}([\^\tau_1 - \e,
  \^\tau_1])$.
  \begin{equation*}
    \int_{\^\tau_1 - \e}^{\^\tau_1}\abs{w_{n}(t)}\ud t =
    \begin{cases}
      0 \quad & \text{if } \^\tau_1 \leq \tau_1^{n}, \\
      \displaystyle\int_{\tau_1^{n}}^{\^\tau_1}\abs{w_{n}(t)}\ud t
      \quad & \text{if } \tau_1^{n} \leq \^\tau_1.
    \end{cases}
  \end{equation*}
  Since $\displaystyle\int_{ \tau_1^{n} }^{\^\tau_1}  \! \! \abs{w_{n}(t)}\ud
  t \leq \norm{w_{n}}_2\sqrt{\^\tau_1 - \tau_1^{n} } \leq
  \sqrt{\^\tau_1 - \tau_1^{n} } \to 0$ as $n\to \infty$, we get $w_{0}
  (t)= 0$ $\qo t \in [\^\tau_1 - \e, \^\tau_1]$.  Similarly one
  proves $w_{0} (t)= 0$ $\qo t \in [\^\tau_2, \^\tau_2 +\e]$.
\end{proof}

Lemma \ref{le:varsecr} proves (time, state)-local optimality of
$\xi^r$, see \cite{PS11}. To get state-local optimality of $\xi^r$ we
need to prove the following:
\begin{lemma}\label{le:simpler}
  Under Assumptions \ref{ass:PMP}--\ref{ass:coerc} there exists $\raggio > 0$
  such that for any $r$, $\norm{r} <  \raggio $ the trajectory $\xi^r$
  defined in Lemma \ref{le:fond} is injective.
\end{lemma}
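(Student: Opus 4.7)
The plan is a proof by contradiction combined with a compactness and uniform local-injectivity argument. We work under the implicit hypothesis that $\^\xi$ is injective (the assumption under which Lemma~\ref{le:simpler} is used in Theorem~\ref{thm:main}). Suppose for contradiction that there exist a sequence $r_n \to 0$ and times $s_n < t_n \in [0, T(r_n)]$ with $\xi^{r_n}(s_n) = \xi^{r_n}(t_n)$.

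First I would invoke Lemma~\ref{le:fond}: since $\tau_1(r)$, $\tau_2(r)$, $T(r)$ and $\mu^r(0)$ depend smoothly on $r$, continuous dependence of ODE solutions on parameters yields $\xi^{r_n} \to \^\xi$ uniformly (after extending both by the appropriate flow to a common interval). Extracting a convergent subsequence $(s_n, t_n) \to (s_0, t_0) \in [0, \^T]^2$ and passing to the limit, $\^\xi(s_0) = \^\xi(t_0)$, so $s_0 = t_0$ by injectivity of $\^\xi$. Furthermore, injectivity of the continuous map $\^\xi$ on the compact interval $[0, \^T]$ gives, for each $\delta > 0$, a number $\eta(\delta) := \inf\{|\^\xi(s) - \^\xi(t)| : |s - t| \ge \delta\} > 0$; combined with the uniform convergence this forbids $|s_n - t_n| \ge \delta$ for large $n$. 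The task thus reduces to proving \emph{uniform local injectivity} of $\xi^{r_n}$ at $s_0 = t_0$.

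Next I would exploit the maximum principle: $\dot\xi^r(t) = f^r_0(\xi^r(t)) + u^r(t) f^r_1(\xi^r(t))$ with $|u^r(t)| \le 1$ satisfies $\scal{\mu^r(t)}{\dot\xi^r(t)} = H^{\max,r}(\lambda^r(t)) = 1$, hence $|\dot\xi^r|$ is uniformly bounded away from zero on compact time intervals for small $r$. If $s_0$ is interior to one of the three arcs, a standard $C^1$ estimate for curves with nonvanishing velocity gives local injectivity of $\xi^r$ near $s_0$ with a radius uniform in small $r$. The delicate case is $s_0 \in \{\^\tau_1, \^\tau_2\}$, where $\dot\xi^r$ is discontinuous. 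At $s_0 = \^\tau_1$, I would work in the local coordinates introduced before~\eqref{eq:intsolo}, in which $f_1 \equiv \partial/\partial y_1$ and $f_0 = \partial/\partial y_2 - y_1(f_{01}(\^ x_1) + O(y))$. Any admissible velocity $f_0 + c\, f_1$ with $|c| \le 1$ then has $y_2$-component $1 - y_1 \cdot (f_{01}(\^ x_1) + O(y))_2 \ge 1/2$ on a neighbourhood of the origin, \emph{independently of $c$}; continuity in $r$ transfers the same lower bound to $f^r_0 + c\, f^r_1$ on a uniform neighbourhood. Therefore $y_2 \circ \xi^r$ is strictly increasing on a time interval around $\^\tau_1$, uniformly in small $r$, which forces $\xi^r$ to be injective across the corner. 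The same argument applies at $\^\tau_2$.

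Covering $[0, \^T]$ by finitely many such local-injectivity neighbourhoods and choosing $\delta$ smaller than the Lebesgue number of this cover contradicts $|s_n - t_n| < \delta$ for large $n$, completing the proof. The main technical obstacle is exactly the behaviour at the two switching times, where $\dot\xi^r$ jumps; it is resolved by the coordinate choice used to write the second variation, which makes $y_2$ monotone along \emph{every} admissible trajectory regardless of the control value---precisely the monotone quantity needed to traverse a corner injectively.
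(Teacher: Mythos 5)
Your proposal is correct, and it reaches the conclusion by a genuinely different route than the paper at the decisive step. Both proofs open identically: argue by contradiction with a sequence $r_n\to 0$ and colliding times, use uniform convergence $\xi^{r_n}\to\^\xi$ plus the (implicitly assumed, in the paper as well) injectivity of $\^\xi$ to force the two collision times to a common limit $\ol t$. From there the paper does a case analysis on where the shrinking interval $[t_1^n,t_2^n]$ sits relative to the switching times and works directly with the integral identity $\int_{t_1^n}^{t_2^n}\dot\xi^{r_n}=0$: inside a bang arc the componentwise mean value theorem forces $h_1(\^\xi(\ol t))=0$, contradicting $H_1\*\^\lambda\equiv 1$; across a junction it compares the averaged velocities on the two sides and derives a contradiction from the linear independence of $f_0,f_1$ at $\xJ$ together with the jump of $\^u$ there (Remark \ref{re:refcont}, hence Assumption \ref{ass:regjun}). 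You instead prove \emph{uniform local injectivity} and conclude with a Lebesgue-number argument: in the interior of each arc via the PMP lower bound $\scal{\mu^r(t)}{\dot\xi^r(t)}=1$ and uniform $C^1$ regularity of the velocity, and at the corners via the adapted coordinates, where the $y_2$-component of $f_0^r+c\,f_1^r$ stays near $1$ for every $\abs{c}\le 1$ (equivalently, $t\mapsto\scal{\mrp}{\xi(t)}$ is strictly increasing along \emph{every} admissible trajectory near $\xJ$, since $F_0(\lrp)=1$ and $F_1(\lrp)=0$). What each buys: the paper's argument is shorter and needs no covering machinery, but it is tied to the specific bang--singular--bang case split and uses the control discontinuity at the junctions; your monotone-functional argument handles the corner without invoking that discontinuity and yields the stronger, reusable statement that all admissible trajectories are injective near the junction points, at the price of the extra compactness bookkeeping. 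Both are complete modulo routine details.
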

\begin{proof}
  Assume, by contradiction, there exists a sequence $\{r_{n}\}$ that
  converges to zero and such that there exist $0 \leq t_1^{n} <
  t_2^{n} \leq T(r_{n})$ such that $\xi^{r_{n}}( t_1^{n}) =
  \xi^{r_{n}}( t_2^{n})$ i.e.
  \begin{equation}
    \label{eq:simple1}
    \int_{t_1^{n}}^{t_2^{n}} \left(
      f_{0}^{r_{n}}(\xi^{r_{n}}(s)) + u^{r_{n}}(s) f_1^{r_{n}}(\xi^{r_{n}}(s))
    \right) \ud s = 0. 
  \end{equation}
  Up to a subsequence we can assume $t_1^{n} \to \ol t_1$ and $t_2^{n}
  \to \ol t_2$ as $n \to \infty$, where $0 \leq \ol t_1 \leq \ol t_2
  \leq {\^ T}$.

  If $\ol t_1 < \ol t_2$, then passing to the limit in
  \eqref{eq:simple1} we get $\^{\xi}(\ol t_1) = \^{\xi}(\ol t_2)$, a
  contradiction. Hence we denote as $\ol t$ the common value of $\ol t_1$
  and $\ol t_2$.

  \noindent {\bfseries First case: \boldmath$0 \leq t_1^{n} <
    t_2^{n} \leq \tau_1(r_{n})$.}\\
  Applying the mean value theorem componentwise in \eqref{eq:simple1},
  for any $k=1, \ldots, n$ we get
  \begin{equation}
    \label{eq:simple2}
    \exists s_{k}^{n} \in [t_1^{n}, t_2^{n}] \colon
    \left( h_1^{r_{n}} \right)_{k}(\xi^{r_{n}}( s_{k}^{n} )) = 0.
  \end{equation}
  Letting $n \to \infty$ in \eqref{eq:simple2} we obtain $ h_1(\^\xi(
  \ol t )) = 0 $, a contradiction since $\ol t \leq \^\tau_1$ and
  $H_1(\^\lambda(t)) = 1 \quad \forall t \in [0, \^\tau_1]$.

  \noindent {\bfseries Second case: \boldmath$ t_1^{n} < \tau_1(r_{n})
    < t_2^{n} \leq
    \tau_2(r_{n})$.}\\
  In this case $\ol t = \^\tau_1$ and \eqref{eq:simple1} reads
  \begin{equation}
    \label{eq:simple4}
    \int_{t_1^{n}}^{\tau_1(r_{n})} 
    h_1^{r_{n}}(\xi^{r_{n}}(s)) \ud s =
    - \int_{\tau_1(r_{n})}^{t_2^{n}} \left(
      f_{0}^{r_{n}}(\xi^{r_{n}}(s)) + u^{r_{n}}(s) f_1^{r_{n}}(\xi^{r_{n}}(s))
    \right) \ud s .
  \end{equation}
  Since
  \begin{align*}
    & \lim_{n\to\infty}\dfrac{1}{\tau_1(r_{n}) - t_1^{n}}
    \int_{t_1^{n}}^{\tau_1(r_{n})}
    h_1^{r_{n}}(\xi^{r_{n}}(s)) \ud s = h_1(\xJ), \\
    & \lim_{n\to\infty}\dfrac{1}{ t_2^{n} - \tau_1(r_{n})}
    \int_{\tau_1(r_{n})}^{t_2^{n}} \!\!\! \left(
      f_{0}^{r_{n}}(\xi^{r_{n}}(s)) + u^{r_{n}}(s)
      f_1^{r_{n}}(\xi^{r_{n}}(s)) \right) \ud s = f_{0}(\xJ) +
    \^u(\^\tau_1+)f_1(\xJ)
  \end{align*}
  then, by \eqref{eq:simple4}, the ratio $\frac{ \left( t_2^{n} -
      \tau_1(r_{n}) \right)}{\tau_1(r_{n}) - t_1^{n}}$ converges to
  some quantity $L$ as $n\to \infty$ and
  \begin{equation*}
    h_1(\xJ) = -L \left(
      f_{0}(\xJ) + \^u(\^\tau_1+)f_1(\xJ)
    \right) 
  \end{equation*}
  i.e.
  \begin{equation*}
    (1 + L)f_{0}(\xJ) + \left( 1 - \^u(\^\tau_1+) \right)
    f_1(\xJ) = 0,
  \end{equation*}
  a contradiction since $f_{0}$ and $f_1$ are linearly independent at
  $\xJ$ and by the discontinuity of the reference control $\^u(t)$ at
  time $\^\tau_1$, see Remark \ref{re:refcont}.

  \noindent The other cases can be dealt with similarly. The case
  $t_1^{n}\leq \tau_1(r_{n}) < \tau_2(r_{n}) \leq t_2^{n}$ cannot
  occur since $t_2^{n} - t_1^{n} \to 0$ as $n \to \infty$ while
  $\tau_2(r_{n}) - \tau_1(r_{n}) \to \^\tau_2 - \^\tau_1 > 0$.
\end{proof}

\subsection{Proof of Theorem \ref{thm:uniq}}
\label{sec:pfuniq}
We now give the proof of the local uniqueness result stated in Theorem
\ref{thm:uniq}.  By Assumption \ref{ass:regjun}, there exists
$\ol\delta > 0$ such that both the maps
\[
\left. \left( u_1 F_{001} + F_{101} \right) \* \^\lambda
\right\vert_{[\^\tau_1 - \ol\delta , \^\tau_1]}\ \text{ and } \
\left. \left( u_2 F_{001} + F_{101} \right) \* \^\lambda
\right\vert_{[\^\tau_2 , \^\tau_2 + \ol\delta]}
\]
are strictly positive.  Without any loss of generality we can assume
$\ol\delta \in (0, \e)$, where $\e > 0$ is given in Lemma
\ref{le:fond}.  Thus the maps
\[
\left. u_1 F_{01}\*\^\lambda\right\vert_{[\^\tau_1 - \ol\delta ,
  \^\tau_1]} \ \text{ and } \ \left. u_2
  F_{01}\*\^\lambda\right\vert_{[\^\tau_2, , \^\tau_2 + \ol\delta]}
\]
are strictly monotone increasing. For any $\delta \in [0, \ol\delta]$
set
\begin{equation}
  \label{eq:M1m1a1}
  \begin{split}
    \Maxdd(\delta) & = \max\left\{ \left( u_1 F_{001} + F_{101}
      \right) \* \^\lambda(t) \colon t \in [\^\tau_1 - \delta ,
      \^\tau_1]
    \right\} ,\\
    \mindd(\delta) & = \min\left\{ \left( u_1 F_{001} + F_{101}
      \right) \* \^\lambda(t) \colon t \in [\^\tau_1 - \delta ,
      \^\tau_1]
    \right\} , \\
    \minbang(\delta)& = \min\left\{ \left( u_1 F_1 \right) \*
      \^\lambda(t) \colon t \in [0, \^\tau_1 - \delta ] \right\} ,
  \end{split}
\end{equation}
Then, a Taylor expansion of $u_1 F_1\* \^\lambda(t) $ in $t =\^\tau_1$
yields, for any $t \in [\^\tau_1 - \delta , \^\tau_1]$ the
inequalities
\begin{align}
  \label{eq:M1m1F01}
  -\delta \Maxdd(\delta) \leq \Maxdd(\delta) \left( t - \^\tau_1
  \right) \leq & u_1 F_{01} \* \^\lambda(t) \leq \mindd(\delta) \left(
    t -
    \^\tau_1 \right) , \\
  \label{eq:M1m1F1}
  \dfrac{\mindd(\delta)\left( t - \^\tau_1 \right)^2}{2} \leq & u_1
  F_1 \* \^\lambda(t) \leq \dfrac{\Maxdd(\delta)\left( t - \^\tau_1
    \right)^2}{2} \leq \dfrac{\Maxdd(\delta)\, \delta^2}{2} .
\end{align}
Moreover without any loss of generality we can assume
$ \argmin \left. u_1 F_1\*\^\lambda(t) \right\vert_{ [0, \^\tau_1 -
  \delta ]} = \^\tau_1 - \delta, $
so that
\begin{equation}
  \label{eq:m1aM1}
  \dfrac{\mindd(\delta)\delta^2}{2} \leq \minbang(\delta)\leq
  \dfrac{\Maxdd(\delta)\, \delta^2}{2} .
\end{equation}
Define
\begin{equation}
  \label{eq:ThetauM}
  \begin{split}
    \Theta & := \min\left\{ F_{101}\*\^\lambda(t) \colon t \in
      [\^\tau_1, \^\tau_2]
    \right\} , \\
    u_M & := \sup\left\{ \abs{\^u(t)} \colon t \in (\^\tau_1,
      \^\tau_2) \right\} = \sup\left\{
      \abs{\dfrac{F_{001}}{F_{101}}\*\^\lambda(t)} \colon t \in
      (\^\tau_1, \^\tau_2) \right\} < 1.
  \end{split}
\end{equation}
Similarly, set
\begin{equation}
  \label{eq:m2M2a2}
  \begin{split}
    \Maxddse(\delta) & = \max\left\{ \left( u_2 F_{001} + F_{101}
      \right) \* \^\lambda(t) \colon t \in [\^\tau_2 , \^\tau_2 +
      \delta]
    \right\} ,\\
    \minddse(\delta) & = \min\left\{ \left( u_2 F_{001} + F_{101}
      \right) \* \^\lambda(t) \colon t \in [\^\tau_2 , \^\tau_2 +
      \delta]
    \right\} , \\
    \minbangse(\delta) & = \min \left\{ u_2 F_1 \*\^\lambda(t) \colon
      t \in [\^\tau_2 + \delta, \^T] \right\}.
  \end{split}
\end{equation}
Again, a Taylor expansion of $u_2 F_1\* \^\lambda(t) $ in $t
=\^\tau_2$ yields, for any $t \in [\^\tau_2 , \^\tau_2 + \delta]$
\begin{align}
  \label{eq:M1m1F01bis}
  \minddse(\delta) \left( t - \^\tau_2 \right) \leq & u_2 F_{01} \*
  \^\lambda(t) \leq \Maxddse(\delta) \left( t -
    \^\tau_2 \right) \leq \Maxddse(\delta) \delta , \\
  \label{eq:M1m1F1bis}
  \dfrac{\minddse(\delta)\left( t - \^\tau_2 \right)^2}{2} \leq & u_2
  F_1 \* \^\lambda(t) \leq \dfrac{\Maxddse(\delta)\left( t - \^\tau_2
    \right)^2}{2} \leq \dfrac{\Maxddse(\delta)\, \delta^2}{2} .
\end{align}
For any $\delta \in (0, \ol\delta)$ choose $\cO_\delta(\lri) \subset
\fibcot$ such that for any $\ell \in \cO_\delta(\lri) $ the following
inequalities hold:
\begin{equation}
  \label{eq:disugua1}
  \begin{alignedat}{2}
    & u_1 F_1\*\^\cF_t(\ell) \geq \dfrac{\minbang(\delta)}{2} \qquad
    && t
    \in [0, \^\tau_1 - \delta] \\
    & \abs{u_1 F_{01}\*\^\cF_t(\ell)} < 2 \delta \Maxdd(\delta )
    \qquad
    && t \in [\^\tau_1 - \delta, \^\tau_2 + \delta] \\
    & \abs{u_1 F_1\*\^\cF_t(\ell)} < \delta^2 \Maxdd(\delta ) \qquad
    && t \in [\^\tau_1 - \delta, \^\tau_2 + \delta] \\
    & \abs{\dfrac{F_{001}}{F_{101}}\*\^\cF_t(\ell)} < \dfrac{1 + 3
      u_M}{4} \qquad
    && t \in [\^\tau_1 - \delta, \^\tau_2 + \delta] \\
    & \abs{F_{101}\*\^\cF_t(\ell)} \geq \dfrac{\Theta}{2} \qquad
    && t \in [\^\tau_1 - \delta, \^\tau_2 + \delta] \\
    & u_2 F_1\*\^\cF_t(\ell) \geq \dfrac{\minbangse(\delta)}{2} \qquad
    && t \in [\^\tau_2 + \delta, \^T + \delta] .
  \end{alignedat}
\end{equation}
Set
\begin{equation}
  \label{eq:vdelta}
  \cV_\delta := \left\{ 
    \left( t, \^\cF_t(\ell) \right) \colon (t, \ell) \in \left[0, \^T +
      \delta \right] \times \cO_\delta(\lri)
  \right\}
\end{equation}
We choose $\raggio_\delta >0 $ such that for any $r \colon \abs{r } \leq
\raggio_\delta$ the followings hold in $\cV_\delta$
\begin{equation}
  \label{eq:disugua2}
  \begin{alignedat}{2}
    & u_1 \Fr{1}(\ell) \geq \dfrac{\a_1(\delta)}{4} \qquad &&
    \text{if } t \leq \^\tau_1 - \delta \\
    & \abs{\Fr{01}(\ell)} \leq 4 \delta \Maxdd(\delta ) \qquad
    && \text{if } t \in [\^\tau_1 - \delta, \^\tau_2 + \delta] \\
    & \abs{\Fr{1}(\ell)} \leq 2\delta^2 \Maxdd(\delta ) \qquad
    && \text{if } t \in [\^\tau_1 - \delta, \^\tau_2 + \delta] \\
    & \abs{\dfrac{\Fr{001}}{\Fr{101}}(\ell)} < \dfrac{1 + u_M}{2}
    \qquad
    && \text{if } t \in [\^\tau_1 - \delta, \^\tau_2 + \delta] \\
    & \abs{\Fr{101}(\ell)} \geq \dfrac{\Theta}{4} \qquad
    && \text{if } t \in [\^\tau_1 - \delta, \^\tau_2 + \delta] \\
    & u_2 \Fr{1}(\ell) \geq \dfrac{\minbangse(\delta)}{4} \qquad &&
    \text{if } t \in [\^\tau_2 + \delta, \^T + \delta] .
  \end{alignedat}
\end{equation}
An easy consequence of \eqref{eq:disugua2} is
\begin{equation}
  \label{eq:disugua2bis}
  \begin{split}
    \left( \Fr{101} \pm \Fr{001}\right) (\ell) \geq F_{101}^r (\ell)
    \left( 1- \abs{\dfrac{ F_{001}^r}{ F_{101}^r}(\ell) } \right) \geq
    \dfrac{\Theta(1 - u_M)}{8} \qquad\qquad\qquad \\
    \text{if } ( t, \ell) \in \cV_\delta , \quad t \in \left[ \^\tau_1
      - \delta, \^\tau_2 + \delta \right], \quad \abs{r} < \raggio_\delta.
  \end{split}
\end{equation}
Let $\~\lambda \colon \left[ 0, \~T \right] \to \fibcot$ be an
extremal of $\Pbr$ such that $\abs{\~T - \^T } < \e$ and whose graph
is in $\cV_\delta$. Let $\~u \colon \left[ 0, \~T \right] \to [-1, 1]$
be the associated control. We want to prove that $\~T =T^r$,
$\~\lambda \equiv \lambda^r$ and $\~u \equiv u^r$.

The proof is split in several steps. First we prove that the
trajectory of $\~\lambda$ intersects $\Sigmar$. Then we show that the
entry time in $\Sigmar$ is in $(\^\tau_1 - \e, \^\tau_1 + \e)$ and
that the trajectory remains on $\Sigmar$ at least untile time
$\^\tau_2 - \e$. Finally we prove that once $\~\lambda$ has left
$\Sigmar$, it remains bang till the final time $\~T$.

\noindent{\bfseries Step 1: {\boldmath $\Fr{1}\*\~\lambda(t)$
    annihilates for
    some $t\in \left[0, \~T\right]$.}} \\
Assume by contradiction that $\Fr{1}\*\~\lambda(t) $ never
annihilates. Since $\~\lambda(0)$ is close to $\lri$, we must have
$\~u(t) \equiv u_1$ for any $t \in \left[0, \~T \right]$. Thus
\begin{equation*}
  \begin{split}
    & u_1 \Fr{1}\*\~\lambda(\^\tau_2) = u_1
    \Fr{1}\*\~\lambda(\^\tau_1)
    + \int_{\^\tau_1}^{\^\tau_2} u_1 \Fr{01}\*\~\lambda(s) \ud s= \\
    & = u_1 \Fr{1}\*\~\lambda(\^\tau_1) + \int_{\^\tau_1}^{\^\tau_2}
    \left( u_1 \Fr{01}\*\~\lambda(\^\tau_1) + \int_{\^\tau_1}^s \left(
        \Fr{101} + u_1 \Fr{001} \right)\*\~\lambda(a) \ud a
    \right) \ud s \geq \\
    & \geq u_1 \Fr{1}\*\~\lambda(\^\tau_1) + \left( \^\tau_2 -
      \^\tau_1 \right) u_1 \Fr{01}\*\~\lambda(\^\tau_1) +
    \dfrac{\Theta (1 -u_M) (\^\tau_2 - \^\tau_1)^2}{16} \geq \\
    & \geq - \left(\^\tau_2 - \^\tau_1 \right) 4 \, \delta \,
    \Maxdd(\delta) + \dfrac{\Theta (1 -u_M) (\^\tau_2 -
      \^\tau_1)^2}{16} > 2\, \delta^{2}\Maxdd(\delta),
  \end{split}
\end{equation*}
if $\delta$ is choosen small enough.  A contradiction of
\eqref{eq:disugua2}. Define
\begin{equation*}
  \~\tau_1 := \inf \left\{ t \in \left[ 0, \~T \right] \colon \Fr{1}\*\~\lambda(t) = 0 \right\} 
\end{equation*}
so that
\begin{equation} \label{eq:lambda1tilda} \~\lambda(t) = \exp t
  \vHr{1}(\~\lambda(0)) \quad \forall t \in [0, \~\tau_1], \qquad
  \Fr{1}\*\~\lambda(\~\tau_1) = 0 , \quad u_1
  \Fr{01}\*\~\lambda(\~\tau_1) \leq 0 .
\end{equation}

\noindent{\bfseries Step 2: {\boldmath $\~\tau_1 \in \left( \^\tau_1 -
      \e,
      \^\tau_1 + \e\right)$, $\e$ defined in Lemma \ref{le:fond}.}}\\
By definition of $\cV_\delta$, $u_1\Fr{1}\*\~\lambda(t) \geq
\dfrac{\a_1(\delta)}{4}$ if $t \leq \^\tau_1 - \delta$, so that
$\~\tau_1 \geq \^\tau_1 - \delta > \^\tau_1 - \e$ 
since $0 < \delta < \ol\delta < \e$.  If $\~\tau_1 \leq \^\tau_1$ we
are done. Otherwise, let $s := \~\tau_1 - \^\tau_1 > 0$. A Taylor
expansion in $\~\tau_1$ gives
\begin{equation*}
  \begin{split}
    & u_1 \Fr{1}\*\~\lambda(\^\tau_1 ) = u_1 \Fr{1}\*\~\lambda(\~\tau_1 - s) = \\
    & = -s \, u_1 \Fr{01}\*\~\lambda(\~\tau_1) + \dfrac{s^2}{2}\left(
      u_1 \Fr{001} + \Fr{101} \right)\*\~\lambda(a), \qquad \text{for
      some } a \in (\^\tau_1, \~\tau_1 )
  \end{split}
\end{equation*}
Hence, by \eqref{eq:disugua2} and \eqref{eq:lambda1tilda},
\begin{equation*}
  \begin{split}
    s & = \dfrac{u_1 \Fr{01}\*\~\lambda(\~\tau_1) + \sqrt{ \left( u_1
          \Fr{01}\*\~\lambda(\~\tau_1) \right)^2 \!\! + \left(\left(
            u_1 \Fr{001} + \Fr{101} \right)\*\~\lambda(a) \right) 2 \,
        u_1 \Fr{1}\*\~\lambda(\^\tau_1 ) }}
    {\left( u_1 \Fr{001} + \Fr{101} \right)\*\~\lambda(a)} \\
    & \leq \sqrt{ \left( \dfrac{ u_1 \Fr{01}\*\~\lambda(\~\tau_1) }{
          \left( u_1 \Fr{001} + \Fr{101} \right)\*\~\lambda(a) }
      \right)^2 + \dfrac{ 2 \, u_1 \Fr{1}\*\~\lambda(\^\tau_1) }{
        \left( u_1 \Fr{001} + \Fr{101} \right)\*\~\lambda(a) }
    } \leq \\
    & \leq \sqrt{ \left(\dfrac{ 8\, \delta \Maxdd(\delta) }{\Theta(1-
          u_M) }\right)^2 + \dfrac{ 16\, \delta^2 \Maxdd(\delta)
      }{\Theta(1- u_M) } } = \dfrac{4 \, \delta\,
      \sqrt{\Maxdd(\delta)}}{\Theta(1 - u_M)} \sqrt{ 4\,
      \Maxdd(\delta) + \Theta (1- u_M) }
  \end{split}
\end{equation*}
Therefore $s < \e$, if $\delta$ is choosen small enough.
 
\noindent{\bfseries Step 3: {\boldmath$ \Fr{1}\*\~\lambda(t) \equiv 0$
    for any $t \in [\~\tau_1, \^\tau_2 - \e]$} }.\\ Let
\begin{equation*}
  \cA := \left\{
    t \in (\~\tau_1 , \^\tau_2 + \delta) \colon \Fr{1}\*\~\lambda(t) \neq 0
  \right\}.
\end{equation*}
$\cA$ is open, hence it contains at least an open interval. Let $I =
(t_1, t_2)\subset \cA$ be a maximal interval. Then $
\Fr{1}\*\~\lambda(t_1) = 0$ and the control $\~u(t)$ is constant in
$I$: $\left. \~u (t) \right\vert_{I} = \~u_I := \left. \sgn{
    \Fr{1}\*\~\lambda(t) }\right\vert_I$, so that
\begin{equation*}
  \~\lambda(t) = \exp (t - t_1) \left(\vFr{0} + \~u_1\vFr{1} \right) \*\~\lambda(t_1) 
  \quad \forall t \in [t_1, t_2], \quad \text{ and } 
  \~u_I \Fr{01}\*\~\lambda(t_1) \geq 0.
\end{equation*}
For any $t \in [t_1, t_2]$ we get
\begin{equation}\label{eq:intermaxi}
  \begin{split}
    & \~u_I \Fr{1}\*\~\lambda(t) = \int_{t_1}^{t} \~u_I
    \Fr{01}\*\~\lambda(s) \ud s
    \geq \~u_I \Fr{01}\*\~\lambda(t_1)  + \\
    & + \int_{t_1}^{t} \ud s \int_{t_1}^s \left( \~u_I \Fr{001} +
      \Fr{101} \right)\*\~\lambda(a) \ud a \geq
    \dfrac{\Theta(1-u_M)\left(t - t_1\right)^2}{16} .
  \end{split}
\end{equation}
Two cases may occur:

{\em First case } $I = (t_1, t_2)$ for some $t_1 < t_2 < \^\tau_2 +
\delta $. \\
In this case $\Fr{1}\*\~\lambda(t_2) = 0$. Choosing $t = t_2$ in
\eqref{eq:intermaxi} we get a contradiction.  This shows that if
$\~\lambda$ leaves $\Sigmar$ before time $\^\tau_2 + \delta$, then it
remains out of $\Sigmar$, at least until time $\^\tau_2 + \delta$.

{\em Second case } $I = (t_1, \^\tau_2 + \delta)$ for some $t_1 <
\^\tau_2 + \delta$. We need to show that $t_1 > \^\tau_2 -
\e$. Assume, by contradiction, that $t_1 \leq \^\tau_2 - \e$. Choosing
$t = \^\tau_2$ in \eqref{eq:intermaxi} and by choosing a small enough
$\delta$ we get
\begin{equation*}
  \~u_1 \Fr{1}\*\~\lambda(\^\tau_2) \geq \dfrac{\Theta(1 - u_M)
    \e^2}{16}  > 2\delta^2 \Maxdd(\delta),
\end{equation*}
a contradiction.  Let
\[
\~\tau_2 := \max \left\{t \in [ \~\tau_1 , \~T ] \colon
  \Fr{1}\*\~\lambda(s) = 0 \quad \forall s \in [\~\tau_1, t]\right\}.
\]
The two cases above prove that $\~\lambda(t) \in \Sigmar$ for any $t
\in [\~\tau_1, \^\tau_2 - \e]$ so that $\~\tau_2 \geq \^\tau_2 - \e$.
If $\~\tau_2 \geq \^\tau_2 + \delta$, then $
\Fr{1}\*\~\lambda(\^\tau_2 + \delta) = 0$, a contradiction by
\eqref{eq:disugua2}. Thus, $\~\tau_2 < \^\tau_2 + \delta < \^\tau_2 +
\e$.

\noindent{\bfseries Step 4: {\boldmath $\~\tau_2 \leq \^\tau_2 + \e$ and, for any $t \in ( \~\tau_2 , \~T)$, $\~\lambda(t) \notin \Sigmar$ and $\~u(t) \equiv u_2$}. }\\
By \eqref{eq:disugua2} and the previous step, $\Fr{1}\* \~\lambda(t)$
is non zero for any $t \in (t_2, \~T]$. Hence its sign is constant and
$\left. \~u(t) \right\vert_{(t_2, \~T]} = \~u_2 :=
\left. \sgn{\Fr{1}\* \~\lambda(t)} \right\vert_{(t_2, \~T]}$. By
\eqref{eq:disugua2} $u_2 \Fr{1}\*\~\lambda(t)$ is positive, hence
$\~u_2 = u_2$.

Since $\~\lambda$ is a bang--singular--bang extremal satisfying the
claims of Lemma \ref{le:fond}, then $\~\lambda = \lambda^r$.



\end{document}